\documentclass[letterpaper,10pt]{article}
\usepackage[colorlinks=true,linkcolor=blue,citecolor=red]{hyperref} 

\usepackage{amsmath}

\usepackage{float}
\usepackage{amssymb}
\usepackage{amsfonts}
\usepackage{graphicx}
\usepackage{epstopdf}
\usepackage{color}
\usepackage{multirow}
\usepackage[title]{appendix}
\usepackage{arydshln} 

\graphicspath{{figures/}}
\usepackage{subcaption}

\usepackage{xcolor, colortbl}
\definecolor{Gray}{gray}{0.90}
\definecolor{LightCyan}{rgb}{0.65,1,1}
\definecolor{Cyan}{rgb}{0.95,1,1}
\definecolor{GRAY}{gray}{0.75}

\usepackage[T1]{fontenc}
\usepackage[utf8]{inputenc}
\usepackage[font=small,labelfont=bf]{caption}

\usepackage[numbers,sort&compress]{natbib}

\newcommand{\bsub}{\begin{subequations}}
\newcommand{\esub}{\end{subequations}$\!$}

\numberwithin{equation}{section}

\usepackage[numbers,sort&compress]{natbib}

\DeclareTextFontCommand{\textcyr}{\cyr}

\usepackage{tabularx}
\usepackage{dirtytalk}
\usepackage{amsthm}
\usepackage{wrapfig}
\usepackage{mathtools}
\usepackage{bbm}

\usepackage[format=plain,
            labelfont={it},
            textfont=it]{caption}

\numberwithin{equation}{section}
\newtheorem{theorem}{Theorem}[section]

\newtheorem{lemma}[theorem]{Lemma}

\newtheorem{cor}[theorem]{Corollary}
\newtheorem{assumption}[theorem]{Assumption}
\newtheorem{remark}[theorem]{Remark}

\newcommand{\bbR}{\mathbb{R}}
\newcommand{\R}{\mathbb{R}}

\newcommand{\bbZ}{\mathbb{Z}}
\newcommand{\Z}{\mathbb{Z}}
\newcommand{\bbC}{\mathbb{C}}
\newcommand{\cO}{\mathcal{O}}
\newcommand{\cL}{\mathcal{L}}

\newcommand{\range}{\mathrm{range}}
\newcommand{\dm}{\mathbbm{d}}
\renewcommand{\Re}{\,\mathrm{Re}\,}
\renewcommand{\Im}{\,\mathrm{Im}\,}

\newcommand{\vx}{{\bf{x}}}

\newcommand{\vn}{{\bf{n}}}

\begin{document}
\begin{center}
{\fontsize{14}{14}\fontfamily{cmr}\fontseries{b}\selectfont{
Oscillations in a scalar differential equation coupled to a diffusive field}}\\[0.2in]
Merlin Pelz$^{\dagger}$ and  Arnd Scheel$^{\dagger}$\\[0.1in]
\textit{\footnotesize
$^\dagger$School of Mathematics, University of Minnesota, Minneapolis, 206 Church St SE, MN 55455, USA}
\end{center}

\begin{abstract}
We study the emergence of periodic oscillations through a Hopf bifurcation in a scalar diffusion equation on the half line coupled to a dynamic boundary condition. Our results quantify the effect of delay through the buffering in the diffusive field on boundary kinetics, drawing a parallel to the emergence of oscillations in delay equations. Technically, the Hopf bifurcation occurs in the presence of essential spectrum induced by the diffusive field, preventing a simple approach via center-manifold reduction. The results are motivated by observations in biological systems where dynamic boundary conditions arise when modeling surface dynamics coupled to bulk diffusion.
\end{abstract}

\section{Introduction}\label{sec:intro}

We study a simple scalar diffusion equation for $u(t,x)\in\R$, $t\in\R$, in the half-space $x>0$, coupled to a dynamic (or Wentzell) boundary condition,
\begin{subequations} \label{eq:sys}
\begin{eqnarray}
    \partial_t u &=& \partial_{xx}u - \sigma^2 u, \quad x\in(0,\infty),\label{eq:diff}\\
    u(t,0)&=&u^-(t),\label{eq:bc}\\
    \frac{d}{dt} u^- &=& f(u^-,\partial_n u(t,0),\mu).\label{eq:dynb}
\end{eqnarray}
\end{subequations}

Here, $u$ is a scalar diffusive field with degradation $\sigma^2$,~\eqref{eq:diff}, $u^-$ is the Dirichlet boundary value,~\eqref{eq:bc}, which evolves according to a scalar ordinary differential equation with smooth vector field $f$ that couples to the diffusive field through its dependence on the outer normal derivative $\partial_n u=-\partial_xu$ at $x=0$,~\eqref{eq:dynb}.   Systems of the form~\eqref{eq:sys} arise in many circumstances where boundary or surface dynamics represented by $u^-$, interact with diffusive fields, represented by $u$, possibly with degradation $\sigma^2\geq0$. Somewhat recently, the possibility of oscillations induced by the buffer effect of the diffusive field has received renewed attention~\cite{scheel2021signaling,brauns2021bulk}.
Our main result demonstrates that this simple equation naturally gives rise to Hopf bifurcations and, depending on the direction of bifurcation, stable periodic orbits. Our results are novel in at least two aspects.  First, the presence of Hopf bifurcations is remarkable since scalar reaction-diffusion equations with separated boundary conditions, even with gradient dependence, do not possess periodic solutions but rather strict Lyapunov functions~\cite{zelenjak,LF}. Second, more technically, our approach resolves difficulties due to the presence of essential spectrum in a linearized version of~\eqref{eq:sys} caused by the diffusive field when $\sigma=0$. We in fact quantify the effect of oscillations on the behavior $u(t,x)$ in the far-field, that is, for $x\to\infty$, continuously in $\sigma\sim 0$, allowing for constant limits as $\sigma =0$ and weak exponential decay or even growth when $\sigma\sim 0$. 

In the remainder of this introduction, we first state our main result and then comment on the novel aspects, including a review of the dynamic boundary conditions, the interpretation of buffering as a delay term, and other results and treatments of Hopf bifurcation in the presence of essential spectrum.

\paragraph{Main result --- assumptions.}

The assumptions set up a situation where  a branch of steady-state solutions $\{u_\ast^-(\mu, \sigma),u_\ast(x;\mu, \sigma)\}$ of~\eqref{eq:sys} destabilizes at $(\mu, \sigma) = (0, \sigma_*)$ due to oscillations localized at the boundary. Note that  for steady-states, $u_\ast(x) = e^{-\sigma x} u_\ast^-$, so that $\partial_n u_\ast(t, 0) = \sigma u_\ast^-$.

\begin{assumption}[Existence of equilibrium]\label{assu:equi}
    We assume  that $f(u_\ast^-, \sigma_\ast u_\ast^-,0)=0$ so that $u^-=u_\ast^-$ and $u_\ast(x)=u_\ast^- e^{-\sigma_\ast x}$ is an equilibrium solution to~\eqref{eq:sys}.
    
\end{assumption}
In order to analyze the stability of  $u_\ast$, we formally linearize~\eqref{eq:sys} at $u_\ast$, set $\mu=0$, $\sigma=\sigma_\ast$, and look for solutions $v^-(t)=e^{\lambda t}v^-_0$, $v(t,x)=e^{\lambda t} v_0(x)$, to find
\begin{subequations} \label{eq:syslin}
    \begin{eqnarray}
       \lambda {v}^-_0 &=& \partial_1f(u_\ast^-, \sigma_\ast u_\ast^-, 0)v^-_0 + \partial_2 f(u_\ast^-, \sigma_\ast u_\ast^-, 0) \; \partial_n v_0(t,0),  \label{eq:dynblin}  \\
        \lambda v_0 &=& \partial_{xx} v_0 - \sigma_\ast^2 v_0, 
        \label{eq:difflin} \\
        v_0( 0) &=& v^-_0.\label{eq:bclin}
    \end{eqnarray}
\end{subequations}
Here the partial derivatives $\partial_j$ are understood as derivatives with respect to the $j^\text{th}$ argument of the function $f$. Bounded solutions to~\eqref{eq:difflin}--\eqref{eq:bclin}  are of the form $v_0(x)=e^{-\sqrt{\lambda + \sigma_\ast^2} \; x}\;v^-_0$ for $\lambda\in \bbC\setminus(-\infty,- \sigma_\ast^2]$, where we take the square root with branch cut along $(-\infty,0]$ so that $\Re\sqrt{\lambda+ \sigma_\ast^2}\geq 0$.
Substituting this explicit form into~\eqref{eq:dynblin} gives nontrivial solutions precisely when 
\begin{equation}\label{eq:char}
    \dm_\ast(\lambda) := \lambda - \partial_1 f(u_\ast^-, \sigma_\ast u_\ast^-, 0) - \partial_2 f(u_\ast^-, \sigma_\ast u_\ast^-, 0) \sqrt{\lambda + \sigma_\ast^2} = 0.
\end{equation}
We shall assume that there exists a marginally stable oscillation. 

\begin{assumption}[Simple imaginary eigenvalues]\label{assu:degen}
    We assume 
    \begin{equation}\label{e:asspt12}
        (1) \; \exists \,\omega_\ast > 0: \; \dm_\ast(i\omega_\ast) = 0, \quad (2) \;\forall \, \omega \in \bbR \;\text{with}\ |\omega| \;\neq \omega_\ast: \; \dm_\ast(i\omega) \neq 0, \quad (3) \; \partial_\lambda \dm_\ast(\lambda) \Big|_{\lambda = i\omega_\ast} \neq 0.
    \end{equation}
    Here,  (1) implies the existence of oscillations with  neutral growth and frequency $\omega_\ast$, (2) implies the absence of solutions with different frequency but also neutral growth rate $\Re(\lambda) = 0$, and (3) implies that the eigenmode with growth rate $\lambda = i\omega_\ast$ is algebraically simple. 
\end{assumption}
From~\eqref{e:asspt12},\,(2), $\dm_\ast(0)\neq 0$, so that $\partial_1 f(u_\ast^-(0, \sigma_\ast), \sigma_\ast u_\ast^-(0, \sigma_\ast), 0) + \partial_2 f(u_\ast^-(0, \sigma_\ast), \sigma_\ast u_\ast^-(0, \sigma_\ast), 0) \;\sigma_\ast\neq 0$. The Implicit Function Theorem, then  allows us to track the steady-state as a function of $\mu$ and $\sigma$ in a neighborhood of $\mu=0$ and $\sigma = \sigma_\ast$ with  resulting smooth branch of solutions,
$u_\ast^-(\mu, \sigma)$,
\begin{equation}\label{e:eqb}
f(u_\ast^-(\mu, \sigma),\sigma u_\ast^-(\mu, \sigma),\mu)=0,
\end{equation} 
slightly abusing notation, with $u_\ast^-(0, \sigma_\ast)=u_\ast^-$. Along this branch, we can linearize and derive a characteristic equation analogous to~\eqref{eq:char}, which we  denote by $\dm$, 
\begin{equation}\label{eq:dlambdaIFT}
    \dm(\lambda; \mu, \sigma) := \lambda - \partial_1 f(u_\ast^-(\mu, \sigma), \sigma u_\ast^-(\mu, \sigma), \mu) - \partial_2 f(u_\ast^-(\mu, \sigma), \sigma u_\ast^-(\mu, \sigma), \mu) \sqrt{\lambda + \sigma^2}.
\end{equation}
By Assumption~\ref{assu:degen}, $\dm(i\omega_\ast;0, \sigma_\ast)=0$ and $\partial_\lambda \dm(i\omega_\ast;0, \sigma_\ast)\neq0$, so that, again with the Implicit Function Theorem, there is a unique family of roots $\lambda_\ast(\mu, \sigma)$ for $\mu\sim 0$ and $\sigma\sim \sigma_\ast$ with
\[
\dm(\lambda_\ast(\mu, \sigma);\mu, \sigma)=0,\qquad \lambda_\ast(0, \sigma_\ast)=i\omega_\ast.
\]
\begin{assumption}[Strict crossing]\label{assu:bif}
    We assume that 
    \begin{equation} \label{eq:strictcrossing}
        \Re(\partial_\mu\lambda_\ast(0, \sigma_\ast)) \neq 0.
    \end{equation}
    In words, the exponential  growth rate $\lambda$  of perturbations is increasing or decreasing with nonzero speed as we cross the critical parameter value $\mu=0$ at fixed $\sigma = \sigma_\ast$. 
\end{assumption} 
With~\eqref{eq:strictcrossing}, we can solve $\Re(\lambda_\ast(\mu, \sigma)) = 0$ for $\mu=\mu_\ast(\sigma)$ and define $\omega_\ast(\sigma) = \Im(\lambda_\ast(\mu_\ast(\sigma), \sigma))$ close to $\sigma = \sigma_\ast$.

\paragraph{Main result --- statement.}

With these assumptions, we are now ready to state our first main result on the existence of nonlinear, time-periodic solutions bifurcating from the branch of spatially constant solutions in~\eqref{eq:sys}. Using~\eqref{eq:strictcrossing}, we can find nearby critical crossings,  $\Re(\lambda_\ast(\mu, \sigma)) = 0$ for $\mu=\mu_\ast(\sigma)$, with frequencies $\omega_\ast(\sigma) = \Im(\lambda_\ast(\mu_\ast(\sigma), \sigma))$.

\begin{theorem} \label{theorem:hopf}
    Let Assumption~\ref{assu:equi},~\ref{assu:degen}, and~\ref{assu:bif} be satisfied. Then for sufficiently small amplitudes $|r|<r_0$ and sufficiently small $|\sigma - \sigma_\ast|$, there exist smooth functions $\mu_\mathrm{nl}$ and $\omega_\mathrm{nl}$ of $r$ and $\sigma$, even in $r$,  with
        \begin{equation}
            \mu_{\mathrm{nl}}(0, \sigma) = \mu_\ast(\sigma),\qquad
            \omega_{\mathrm{nl}}(0, \sigma) = \omega_\ast(\sigma),
        \end{equation}
        and, with time-reparameterization $s = \omega_{\mathrm{nl}} t$, periodic functions
        \begin{equation}
            \tilde{u}(s, x; r, \sigma) = \
            \tilde{u}(s+2\pi, x; r, \sigma),
        \end{equation}
        such that $u(t,x; r, \sigma)=\tilde{u}(\omega_{\mathrm{nl}}(r, \sigma)t, x; r, \sigma)$ has boundary values  $u^-(t; r, \sigma) := \tilde{u}(\omega_{\mathrm{nl}}(r, \sigma)t, 0; r, \sigma)$ and is a solution to~\eqref{eq:sys} with $\mu=\mu_\mathrm{nl}(r, \sigma)$. Moreover, there exist $C, \eta > 0$ and $u_\infty(r, \sigma) \in \bbR$ such that
        \begin{eqnarray}\label{e:asy}
            |\tilde{u}(s, x; r, \sigma) - u_\infty(r, \sigma) e^{-\sigma x}| &\leq& C e^{-\eta x} \qquad \qquad \text{as} \quad x\to \infty,
        \end{eqnarray}
        for all $(r, \sigma)$ close to $(0, \sigma_\ast)$.
        The amplitude of oscillations at the boundary is given by $r$, that is, 
        \begin{equation}
            \tilde{u}(s,0; r, \sigma) = r\cos(s) + \cO(r^2).
        \end{equation}
        The branch of solutions is unique up to shifts in $s$ when assuming boundedness in the sense of~\eqref{e:asy}.
\end{theorem}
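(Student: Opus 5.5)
Because the bulk equation is linear and autonomous, the plan is to eliminate the diffusive field entirely and reduce \eqref{eq:sys} to a scalar problem for the boundary value alone. After rescaling time to $s=\omega t$, the boundary trace $u^-(s)$ should be $2\pi$-periodic, and we expand it in Fourier series, $u^-(s)=\sum_k \hat u_k e^{iks}$. For each mode $k$, the bounded solution of $ik\omega\,\hat u_k(x) = \partial_{xx}\hat u_k-\sigma^2\hat u_k$ with boundary value $\hat u_k$ is
\[
\hat u_k(x)=e^{-\sqrt{ik\omega+\sigma^2}\,x}\,\hat u_k,
\]
with the branch of the square root fixed after \eqref{eq:char}. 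Its normal derivative at $x=0$ is $\sqrt{ik\omega+\sigma^2}\,\hat u_k$. Thus $\partial_n u(\cdot,0)=\mathcal{D}_{\omega,\sigma}u^-$, where $\mathcal{D}_{\omega,\sigma}$ is the Fourier multiplier with symbol $\sqrt{ik\omega+\sigma^2}$. This multiplier is a nonlocal Dirichlet-to-Neumann ``fractional derivative'' of order $1/2$, and it plays the role of the delay term. Problem \eqref{eq:sys} becomes the scalar periodic problem
\[
F(u^-;\omega,\mu,\sigma):=\omega\,\partial_s u^- - f\bigl(u^-,\mathcal{D}_{\omega,\sigma}u^-,\mu\bigr)=0,
\]
posed from $H^{1}_{per}$ into $L^2_{per}$ (or $H^{s+1}\to H^s$ with $s>1/2$, so that $f$ acts smoothly as a Nemytskii operator). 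Since $\mathcal{D}$ has order $1/2<1$, $F$ is smooth, and its linearization is Fredholm of index zero. The Fourier-$k$ symbol of the linearization at the equilibrium $u_\ast^-(\mu,\sigma)$ is exactly $\dm(ik\omega;\mu,\sigma)$.

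The reduction of $F$ proceeds by Lyapunov--Schmidt, written to exploit the $S^1$-equivariance given by shifts in $s$. At $(\omega_\ast,0,\sigma_\ast)$, Assumption~\ref{assu:degen}(1),(2) gives $\dm_\ast(ik\omega_\ast)\ne0$ for $|k|\ne1$ (including $k=0$), so the kernel is spanned by $\cos s,\sin s$. For large $|k|$ the symbol grows like $ik\omega$, so the inverse on the complement is bounded uniformly. We write $u^-=u_\ast^-(\mu,\sigma)+r\cos s+w$ with $w\perp\{\cos s,\sin s\}$, and use the shift symmetry to remove the $\sin s$ component. The implicit function theorem then solves the complementary equation for $w=\cO(r^2)$ smoothly in $(r,\omega,\mu,\sigma)$. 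Projecting onto $e^{is}$ and dividing by $r$ leaves a single complex equation $g(r,\omega,\mu,\sigma)=0$ with $g(0,\omega,\mu,\sigma)=\dm(i\omega;\mu,\sigma)$. By Assumption~\ref{assu:degen}(3) and Assumption~\ref{assu:bif}, the derivative of $(\Re g,\Im g)$ with respect to $(\mu,\omega)$ is invertible: the $\omega$-derivative is $i\partial_\lambda\dm$, and the transversality of the $\mu$-derivative is equivalent to $\Re\partial_\mu\lambda_\ast\ne0$. The implicit function theorem then yields $\mu_{\rm nl}(r,\sigma)$ and $\omega_{\rm nl}(r,\sigma)$. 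Their values at $r=0$ match $\mu_\ast(\sigma)$ and $\omega_\ast(\sigma)$, because $\dm(i\omega_\ast(\sigma);\mu_\ast(\sigma),\sigma)=0$ by construction. Evenness in $r$ follows from the symmetry $s\mapsto s+\pi$, which maps $r\mapsto -r$. Uniqueness comes from the local uniqueness in the IFT. The boundedness assumption \eqref{e:asy} justifies the restriction to bounded bulk solutions, since these are the only ones our Fourier representation captures.

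The remaining task is the far-field estimate \eqref{e:asy}. Summing the modes, $\tilde u(s,x)=\sum_k \hat u_k e^{iks}e^{-\sqrt{ik\omega+\sigma^2}x}$. We set $u_\infty:=\hat u_0$, the mean of $u^-$, and the $k=0$ term is then exactly $u_\infty e^{-\sigma x}$ (here $\sqrt{\sigma^2}=\sigma$ by the branch choice, with $\sigma$ near $\sigma_\ast$). For $k\ne0$ we have $\Re\sqrt{ik\omega+\sigma^2}\ge c\sqrt{|k|\omega}\ge \eta>0$ uniformly for $\sigma$ near $\sigma_\ast$ and $\omega$ near $\omega_\ast$. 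Summability of $\hat u_k$ (from $u^-\in H^1$) then gives $Ce^{-\eta x}$. This is where uniformity in $\sigma\sim0$, when $\sigma_\ast$ is small, matters: the $k=0$ mode is separated explicitly, so no spectral gap in $\sigma$ is needed.

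I expect the main obstacle to be the functional-analytic setup. Specifically, we must check that $u^-\mapsto\mathcal{D}_{\omega,\sigma}u^-$ is smooth jointly in $(\omega,\sigma)$ as a map between Sobolev spaces. The $\omega$-derivative produces the symbol $ik/(2\sqrt{ik\omega+\sigma^2})$, which is of order $1/2$, so each $\omega$-derivative costs half a derivative. The $\sigma$-derivative is also delicate near $\sigma=0$ in the $k=0$ mode, where $\sqrt{\sigma^2}=|\sigma|$. To handle the $\omega$-dependence, I would rescale the multiplier and absorb $\omega$ into the nonlinearity. Alternatively, I would work in the scale $H^{s+1}\to H^s$ and observe that the terms in which $\omega$ enters are lower order relative to $\omega\partial_s$, so that $F$ is $C^k$ for fixed finite $k$, which is enough for the IFT.
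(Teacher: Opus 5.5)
Your proposal follows the paper's proof essentially step for step. It reduces to a periodic boundary equation via the Dirichlet-to-Neumann multiplier, performs a Lyapunov--Schmidt splitting with kernel $\mathrm{span}\{\cos s,\sin s\}$, divides by $r$, and applies the implicit function theorem in $(\mu,\omega)$, using that strict crossing is equivalent to $\mathbb{R}$-linear independence of $\partial_\mu\dm$ and $i\partial_\lambda\dm$. Evenness comes from $s\mapsto s+\pi$, and the far field from the Fourier representation. You actually spell out the far field more carefully than the paper, via $\Re\sqrt{ik\omega+\sigma^2}\ge\sqrt{|k|\omega/2}$ and $\ell^1$-summability of the coefficients.

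\textbf{The $k=0$ branch.} The one loose end you flag but do not close is real, and it matters exactly in the case $\sigma_\ast=0$ that the theorem is built for. If you use the principal-branch symbol $\sqrt{ik\omega+\sigma^2}$ also at $k=0$, you get $|\sigma|$, which causes three problems:
\begin{itemize}
\item Your $F$ is not differentiable in $\sigma$ at $\sigma=0$, so you cannot obtain $\mu_{\mathrm{nl}},\omega_{\mathrm{nl}}$ smooth in $\sigma$.
\item For $\sigma<0$, the constant $u_\ast^-(\mu,\sigma)$ from \eqref{e:eqb}, which solves $f(u,\sigma u,\mu)=0$, is not a zero of your $F$; it would have to solve $f(u,|\sigma|u,\mu)=0$. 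So your ansatz $u^-=u_\ast^-+r\cos s+w$ does not start from a solution.
\item The mean mode comes out as $\hat u_0e^{\sigma x}$ rather than the $u_\infty e^{-\sigma x}$ in \eqref{e:asy}.
\end{itemize}
The paper removes this by definition: the $k=0$ multiplier is $\sigma$ itself. That is, the mean mode is always continued as $\hat u_0 e^{-\sigma x}$, which is bounded for $\sigma\ge0$ and is the purely growing profile of Remark~\ref{r:ff} for $\sigma<0$. This choice is linear in $\sigma$ and consistent with the equilibrium branch, and for $k\neq0$ the symbol is analytic in $\sigma$.

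\textbf{The $\omega$-dependence.} Your concern here is unfounded. Since $\partial_\omega^n\sqrt{ik\omega+\sigma^2}=c_n(ik)^n(ik\omega+\sigma^2)^{1/2-n}$ has order $1/2$ for every $n$, $\mathcal{D}_{\omega,\sigma}$ is analytic in $\omega$ as a bounded map $H^{s+1}\to H^{s+1/2}$. No rescaling or finite-$C^k$ argument is needed.

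\textbf{Function spaces.} Discard the $H^1\to L^2$ option. There $\mathcal{D}_{\omega,\sigma}u^-$ lies only in $H^{1/2}$, where a general nonlinear $f$ does not act smoothly. The setting $H^{s+1}\to H^s$ with $s>1/2$ (the paper takes $s=1$) is the right one.
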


\begin{remark}[Far-field asymptotics]\label{r:ff}
    Implications of the asymptotics~\eqref{e:asy} are of course only nontrivial when $\sigma\sim 0$. Particularly, when $\sigma=0$, they quantify an asymptotic limit of the oscillations that is possibly \emph{not} the equilibrium $u_\ast^-(\mu,\sigma)$. Since the diffusion equation preserves the limit $\lim_{x\to\infty}u(t,x)$ in time, we then expect that a localized perturbation of the equilibrium $u_\ast^-(\mu,\sigma)$ approaches the periodic solution only locally uniformly, inducing an error-function type spreading in the far-field. On the other hand, the theorem also yields solutions with ``pure'' exponential growth $e^{-\sigma x}$ when $\sigma \lesssim 0$. Here, pure exponential growth refers to the absence of a weakly decaying term $e^{\sigma x}$ in the asymptotics, a condition that is well known from the analysis of resonances in Schr\"odinger operators.
\end{remark}

\paragraph{Dynamic or Wentzell boundary conditions.} Clearly, the presence of oscillations is intimately related to the presence of a dynamic boundary condition, as opposed to, say, a nonlinear Robin condition, which once linearized would yield only real eigenvalues of the linearization. The boundary conditions were derived by  Wentzell (sometimes spelled Venttsel) in 1959  in a closed bounded region as a most general boundary condition  to an elliptic operator $\cL$ representing an infinitesimal generator of a Markov process in the region. The boundary condition involves terms that are not computable only with values along the boundary and therefore, as Wentzell pointed out, is not a boundary condition in the strict sense of the term. A more recent derivation can be found in~\cite{goldstein2006derivation}, with Wentzell boundary conditions for the diffusion and wave equations resulting in, for the linear case in arbitrary dimension $N$,  a boundary differential equation $\frac{d}{dt} u(t, \vx_0) = -a(\vx_0)u(t, \vx_0) + b(\vx_0)\; \partial_\vn u(t, \vx_0)$ for $\vx_0 \in \partial \Omega$ of a domain $\Omega \subset \bbR^N$.

We here refer to the boundary condition~\eqref{eq:dynb} as a dynamic boundary condition, since it involves the partial time derivative, which cannot easily be eliminated replacing it by $\Delta u(t, 0) - \sigma^2 u^-$ since $\Delta u(t, 0)$ on the boundary point is not defined when, say, posing the equation for $u(0,\cdot)\in L^2$. 

A simple interpretation of the boundary condition can be gained when thinking of a discretized version of the diffusion equation, with small $\cO(dx)$-sized compartments in the bulk, $j \in \{1,2,\ldots\}$, and an order-one sized compartment at the boundary, $j=0$, each compartment well-mixed with constant concentrations $u_j$; see also~\cite{scheel2021signaling}. Assuming fluxes of strength $1/dx$, one obtains equations for the concentrations at $u_j$, $j \in \{1,2,\ldots\}$, in the bulk with simple 3-point stencil for the Laplacian $\frac{d}{dt}u_j=(u_{j+1}+u_{j-1} -2 u_j)/dx^2$, $j \in \{1,2,\ldots\}$, and $\frac{d}{dt}u_0=(u_1-u_0)/dx$. In the limit $dx\to 0$, this gives
\[
\partial_tu=\partial_{xx}u,\ x>0,\qquad u|_{x=0}=u^-,\qquad \frac{d}{dt}u^-=-\partial_{n}u|_{x=0},
\]
with conserved total mass $u^-+\int u$. In our more general system~\eqref{eq:sys}, we also assume that the order-one size compartment $u_0$ exhibits a nonlinear reaction, modeled through the dependence of $f$ on $u^-$, which in addition depends on the flux $\partial_n$ in a nontrivial fashion. In fact, we shall see in Section~\ref{sec:examples_nodegrad} that key to the presence of a Hopf bifurcation is a positive coefficient in the dependence on the flux $\partial_n u$, for instance  $\frac{d}{dt}u^-=-\alpha u +\sqrt{2\alpha}\,\partial_{n}u|_{x=0}$. Such terms were motivated in~\cite{scheel2021signaling} through directed motion and resulting gradient sensing of stem cells.

\paragraph{Gradient sensing and polarity preservation in planarian regeneration.}
A scalar system of the form~\eqref{eq:sys} was derived in~\cite{scheel2021signaling} to model the robust regeneration of flatworms after cutting and grafting experiments. 
When cut into pieces as small as 0.5\% of the original size, each of the pieces regenerates into a fully functional planarian~\cite{rink2018stem},  while maintaining its original orientation, that is, it preserves polarity so that the head regenerates from the body part that was situated closest to the head before injury.   The model in~\cite{scheel2021signaling} focuses on the 1-D anterior-posterior axis of the planarian, incorporates dynamic boundary conditions as in~\eqref{eq:sys}, and can in fact robustly preserve polarity and reproduce behavior closely resembling lab experiments.

The model derived in~\cite{scheel2021signaling} is based on experimental evidence of a global signal gradient, which then even in small cut out body parts of otherwise roughly  homogeneous tissue preserves directional information. Regeneration via stem cell migration and differentiation is then guided by this signal gradient to induce differentiation in a somewhat separated wound healing region based on the direction of the gradient relative to the wound surface. 

The basic model in~\cite{scheel2021signaling} involves dynamic variables for concentrations of stem cells, head cells, tail cells, a signal magnitude guiding stem cell differentiation into head cells, another signal magnitude guiding stem cell differentiation into tail cells, and the magnitude of a long-range wnt-related signal. Wnt is a family of lipid-modified signaling glycoproteins~\cite{cadigan1997wnt}. Their signaling pathways use either nearby cell-cell communication (paracrine) or same-cell communication (autocrine) and are highly evolutionarily conserved in animals and humans~\cite{nusse1992wnt}. 

Using an adiabatic reduction for the kinetics by equilibrating  fast dynamics of the signal magnitudes leading to stem cell differentiation and using a nonlinear analysis based on front propagation,  a reduced model involving only the magnitude of the wnt-related signal and an order parameter that incorporates both the concentrations of head cells and tail cells is derived. In a final simplification the authors equilibrate the order parameter itself and arrive at the following reduced system involving only the magnitude $w$ of the wnt-related signal, where the key feature is the restoration of the signal gradient. The system derived there is a simple scalar diffusive field with a dynamic boundary condition  of the form 
\begin{equation}
\partial_tw=\partial_{xx}w,\ |x|<L,\qquad \frac{d}{dt}w_\pm   =-\frac{1}{\gamma}    \partial_n w|_{\pm L}  +   \Psi_w^\pm \   ,
\label{e:orderscal}
\end{equation}
with 
\[
\Psi_w^\pm=\tau\left[\chi^\varepsilon_{<-\theta}(\partial_n w)(-w)+\chi^\varepsilon_{>\theta}(\partial_n w)(1-w)\right],
\]
and smoothed versions of shifted indicator functions
\begin{align}
     \chi_{>\theta}^\varepsilon(\xi)&=\frac{1}{2}\left(\tanh((\xi-\varepsilon\theta)/\varepsilon)+1)\right), \notag\\
     \chi_{<-\theta}^\varepsilon(\xi)&=\frac{1}{2}\left(\tanh(-(\xi+\varepsilon\theta)/\varepsilon)+1)\right).\label{e:char}
\end{align}
Roughly speaking, the source terms $\Psi_w^\pm$ attempt to quickly ramp up concentrations to 1 or deplete concentrations to 0 depending on the sign of the normal derivative at the boundary, thus robustly restoring an initial weak gradient field to a strong concentration ramp from 0 to 1.  Robin boundary conditions cannot accomplish this robust restoration since an initial adaptation would adjust gradients in the diffusive field to given Dirichlet boundary data, in an initial phase, thus destroying the key gradient information near the boundary.  A linearized analysis in~\cite[\S4.4]{scheel2021signaling} demonstrates that at the boundary of robust recovery, for instance when the residual gradient is too weak, instabilities arise that are initially oscillatory; see also simulations in the supplementary materials, there. It is this oscillatory behavior that we are capturing here on a nonlinear level.

\paragraph{Buffering and delay equations.} As our theorem demonstrates, the dynamic boundary condition adds significant complexity to the dynamics when compared to, say, a nonlinear Robin boundary condition. A simple rationale for this complexity is based on the ability of the diffusive field to store information by effectively acting as a buffer. For instance, an increase in $u^-$ due to the reaction terms at the boundary leads to an increase in the diffusive field $u(t,x)$ that diffusively, hence slowly, spreads in $x$, so that the average of $u$ only slowly adapts to the changed value of $u^-$. This is reflected of course also in the integral terms of a Dirichlet-to-Neumann operator, linking $\partial_n u|_{x=0}$ to given time-dependent Dirichlet data $u|_{x=0}=u_0(t)$, showing history dependence decreasing with $t^{-{1/2}}$. In some ways much simpler history dependence as in $\frac{d}{dt}u(t)=f(u(t),u(t-1))$, or, specifically, the delayed negative feedback  $u'=-\alpha u(t-1)$  is known to lead to Hopf bifurcation and oscillations;
see for instance~\cite{DGVW}. We  comment on more similarities in our discussion.

\paragraph{Bifurcation in the presence of essential spectrum.}
In addition to capturing the buffering effect induced by the coupling of the boundary to the diffusive bulk in the form of emerging oscillations, our analysis resolves a curious technical difficulty in the case $\sigma=0$ and therefore also when $\sigma \sim 0$. The generator $\mathcal{L}$ of the linearized equation 
\[
\frac{d}{dt}u^-=a u^- + b \,\partial_n u|_{x=0}, \qquad u(t,0)=u^-,\qquad \partial_t u = \partial_{xx}u,
\]
defined through 
\[
\mathcal{L}(u^-,u)=(a u^- + b\, \partial_n u|_{x=0},\partial_{xx}u),
\]
as a closed, densely defined operator on $\R\times L^2$, with boundary condition $u(0)=u^-$, possesses essential spectrum $\mathrm{spec}_\mathrm{ess}\,\mathcal{L}=(-\infty,0]$ due to the unbounded domain $x>0$. This essential spectrum cannot be obviously eliminated by the choice of weighted function spaces. As a consequence, the bifurcation due to eigenvalues at $\pm i \omega_\ast$ cannot be reduced dynamically to a finite-dimensional center-manifold in any immediate fashion. Roughly speaking, one expects the oscillations to couple to the neutral diffusive field which induces infinitely many weakly decaying degrees of freedom. It is then rather surprising that an astutely formulated Lyapunov-Schmidt reduction, which we carry out in our proof, is able to mostly ignore the effect of continuous spectrum. The situation is in fact more complicated when considering stability in Section~\ref{sec:stab}. In the stability analysis of the periodic solution, Floquet multipliers are the relevant object, at the bifurcation point given by the eigenvalues of the period map which are both equal to one, due to the Hopf eigenvalues. In addition, diffusion generates essential spectrum in the period map of the form $\exp((-\infty,0])=(0,1]$, so that neutral Floquet multipliers of the periodic orbit generated in the Hopf bifurcation are inherently sitting at the edge of the essential spectrum, with additional stable or unstable Floquet multipliers hidden inside or emerging from the essential spectrum. Again, it is somewhat surprising that our analysis identifies this coupling to be irrelevant, present only through terms that cancel at leading (and thus relevant) order.

We note that a related scenario was also investigated in~\cite{brand2004hopf,kunze2004exchange}, where a localized inhomogeneous term, rather than a boundary condition here,
induces point spectrum in the linear operator. The analysis there is more difficult since nonlinearity is present in the entire domain, so that the authors need to control the interaction between diffusion and possible nonlinear growth in the entire domain. On the other hand, the analysis there is greatly simplified by the presence of an advection term $\partial_x u$, which allows to push the essential spectrum into the negative half plane for the linearized problem. Indeed, adding a drift term $c\,\partial_x u$ in our system~\eqref{eq:sys} allows for a substitution $u=e^{-cx/2}\tilde{u}$ that removes the drift term and induces degradation $\sigma^2=c^2/4$, thus pushing the essential spectrum into the negative half plane.

\paragraph{Outline.}
We prove Theorem~\ref{theorem:hopf} in Section~\ref{s:2} and compute expansions of the bifurcating branch of periodic solutions in Section~\ref{sec:examples_nodegrad}. In particular, we determine the direction of branching and thus the super- or subcritical nature of the bifurcation depending on whether the bifurcating branch coexists with stable or unstable trivial state, respectively. In Section~\ref{sec:stab}, we establish spectral stability and instability of bifurcating periodic solutions for super- and subcritical branches.  Section~\ref{sssec:numericalasympt} contains numerical results based on numerical continuation of an associated periodic boundary-value problem that includes the Dirichlet-to-Neumann operator as a nonlocal term. We in particular corroborate our asymptotic expansions and explore homoclinic limits of the branch of periodic solutions far away from the bifurcation points, reminiscent of observations in delay equations. We conclude with a longer discussion that points to several other settings, in particular in biological contexts,  where mechanisms and phenomena that we distill here in their simplest form may be relevant in the collective dynamics.

\section{Proof of Theorem~\ref{theorem:hopf}}\label{s:2}
The proof proceeds in four  steps. We first reduce to a boundary-integral equation. We then investigate the linearization of this boundary-integral equation, perform the Lyapunov-Schmidt reduction, and finally analyze the reduced equation. 
\paragraph{Reduction to boundary-integral equation.}
We rescale time introducing $\tilde{u}(s, x) := u(\frac{s}{\omega}, x)$ with rescaled argument $s=\omega t$. Equation~\eqref{eq:diff} then becomes
\begin{equation} \label{eq:diffs}
    \omega \partial_s \tilde{u}(s, x) = \partial_{xx} \tilde{u} - \sigma^2\tilde{u}, 
\end{equation}
or, writing $\tilde{u}$ as a Fourier series $\tilde{u}(s, x) = \sum_{\ell\in\bbZ} \hat{u}_\ell(x)  e^{i\ell s}$,
\begin{equation}
    \tilde{u}(s, x) = \sum_{\ell=-\infty}^\infty \hat{u}_\ell(x)  e^{i\ell s},
\end{equation}
we find $(i \omega \ell + \sigma^2) \hat{u}_\ell  = \hat{u}_\ell'' $. We require boundedness of $\hat{u}_\ell(x)$  for $\ell \neq 0$, that is 
\begin{equation}\label{eq:frep}
\hat{u}_\ell(x) = \hat{u}^0_\ell e^{-\sqrt{i\omega \ell + \sigma^2} \;x} \quad \forall \ell \in \mathbb{Z}\setminus\{0\},
 \end{equation}
with standard branch cut.
Substituting this form into the dynamic boundary equation~\eqref{eq:dynb}, we  obtain  time derivative and gradient of $u$ as 
\begin{subequations}
    \begin{eqnarray}\label{eq:def_D}
    \dot{u}_-(t) = \omega \partial_s \tilde{u}(s, 0) =  \sum_{\ell =-\infty}^\infty i\omega \ell  \hat{u}_\ell ^0  e^{i\ell s} &=:& D(\omega) u^-, \\ \partial_n u(t, 0) = \sum_{\ell \in \bbZ\setminus\{0\}} \sqrt{i\omega \ell + \sigma^2} \; \hat{u}_\ell ^0 e^{i\ell s} + \sigma \hat{u}_0^0 &=:& D(\omega, \sigma)^{1/2} u^-.
    \end{eqnarray}
\end{subequations}
Note that $D(\omega, \sigma)^{1/2}$ is a nonlocal pseudo-differential operator, usually referred to as the Dirichlet-to-Neumann operator as it relates Dirichlet data $u(t,0)$ to Neumann data $\partial_n u(t,0)$, in our case for the heat operator in a semi-infinite strip. Its Fourier multiplier symbol is smooth in $(\sigma, \omega)$, also near $\sigma = 0$.

Having thus solved the equation in the bulk, we can restrict to the time-periodic boundary-integral equation
\begin{equation}\label{eq:bi}
    \tilde{F}(u^-, \mu, \omega, \sigma) :=D(\omega) u^- - f(u^-, D(\omega, \sigma)^{1/2} u^-, \mu) = 0,
\end{equation}
in the sense that any solution of~\eqref{eq:bi} yields $u(s,0)$ by~\eqref{eq:bc}, then $\tilde{u}(s,x)$ through~\eqref{eq:frep}, and $u(t,x)$ from rescaling $s=\omega t$. 

It is convenient to simplify the equation in a first step by shifting the equilibrium from~\eqref{e:eqb} to the origin, setting $u^-=u_\ast^-(\mu, \sigma)+v^-$, with the $s$-independent function $u_\ast^-$, to find the equivalent equation
\begin{equation}\label{eq:bi2}
    F(v^-, \mu, \omega, \sigma) :=D(\omega) v^- - f(u_\ast^-(\mu, \sigma) + v^-, \sigma u_\ast^-(\mu, \sigma) +D(\omega, \sigma)^{1/2} v^-, \mu) = 0,
\end{equation}
We collect some basic properties of $F$. 

\begin{lemma}
    The function $F(v^-, \mu, \omega, \sigma)$ is a smooth map from $H^2(S^1, \bbR) \times \bbR \times \bbR \times \bbR \to H^1(S^1, \bbR)$, where $S^1=\bbR/(2\pi\bbZ)$. In addition, $F(0, \mu, \omega, \sigma) = 0$  since $f(u_\ast^-(\mu, \sigma), \sigma u_\ast^-(\mu, \sigma), \mu)=0$. The derivative with respect to the first argument at $(0, \mu, \omega, \sigma)$ is the linear operator
    \begin{eqnarray*}
        \partial_1 F(0, \mu, \omega, \sigma) &=& D(\omega) - \partial_1 f_\ast - \partial_2 f_\ast  D(\omega, \sigma)^{1/2}  
    \end{eqnarray*}
    where $f$ and its derivatives, denoted here by $\partial_1 f_\ast$ and $\partial_2 f_\ast$,    
    are evaluated at $u^- = u_\ast^-(\mu, \sigma)$.
\end{lemma}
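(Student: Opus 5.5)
The plan is to treat $F$ as a composition of three maps and check smoothness for each one. The pieces are a bounded linear part in $v^-$, a Nemytskii (substitution) operator induced by the smooth $f$, and the smooth scalar branch $u_\ast^-(\mu,\sigma)$ from~\eqref{e:eqb}.

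\textbf{Linear operators.} On Fourier coefficients, $D(\omega)$ acts by multiplication with $i\omega\ell$. It is therefore bounded $H^2\to H^1$, and it depends linearly, hence smoothly, on $\omega$. The operator $D(\omega,\sigma)^{1/2}$ has symbol $m(\ell;\omega,\sigma)=\sqrt{i\omega\ell+\sigma^2}$ for $\ell\neq 0$ and $\sigma$ for $\ell=0$. This symbol satisfies $|m|\le C(1+|\ell|)^{1/2}$, so $D(\omega,\sigma)^{1/2}$ maps $H^2\to H^{3/2}\subset H^1$.

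For smooth dependence on the parameters, note the following. For $\ell\neq0$ and $\omega$ near $\omega_\ast>0$, the quantity $i\omega\ell+\sigma^2$ stays away from the branch cut $(-\infty,0]$, uniformly in $\sigma$ near $\sigma_\ast$ (also when $\sigma_\ast=0$). Hence $m$ is smooth in $(\omega,\sigma)$. Each derivative $\partial_\omega^j\partial_\sigma^k m$ is bounded by $C_{jk}(1+|\ell|)^{1/2}$, since derivatives in $\omega$ bring factors $\ell/(i\omega\ell+\sigma^2)$ of size $O(1)$. The $\ell=0$ symbol $\sigma$ is trivially smooth.

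The operator-valued map $(\omega,\sigma)\mapsto D(\omega,\sigma)^{1/2}\in\mathcal{L}(H^2,H^1)$ is then smooth. To see this, write the Taylor remainder of the symbol using these uniform bounds, which controls difference quotients in operator norm. This step, uniform symbol estimates near $\sigma=0$, is the only one requiring care. The key point is that we only consider $\omega$ bounded away from $0$, consistent with $\omega$ near $\omega_\ast(\sigma)$.

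\textbf{Nemytskii operator.} Since $H^1(S^1)$ is a Banach algebra embedded in $C^0$, a smooth function $g:\R^3\to\R$ induces a smooth map $(a,b,\mu)\mapsto g(a,b,\mu)$ from $H^1\times H^1\times\R\to H^1$. This follows from the standard argument: Taylor expansion of $g$ with remainder bounded using the $L^\infty$ control of $H^1$ functions, together with the chain rule $\partial_s g(a,b)=\partial_1 g\,a'+\partial_2 g\,b'$ for the derivative part. (We may localize $f$ to bounded arguments, since $H^1$-bounded sets are $L^\infty$-bounded.)

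We apply this with $a=u_\ast^-(\mu,\sigma)+v^-$ and $b=\sigma u_\ast^-(\mu,\sigma)+D(\omega,\sigma)^{1/2}v^-$. Both are smooth in $(v^-,\mu,\omega,\sigma)$ with values in $H^1$, because $H^2\hookrightarrow H^1$ and $u_\ast^-$ is smooth in $(\mu,\sigma)$ by the Implicit Function Theorem. Composing gives smoothness of $F$ into $H^1$.

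\textbf{Zero and derivative.} At $v^-=0$ we have $D(\omega)0=0$ and $D(\omega,\sigma)^{1/2}0=0$, so
$$F(0,\mu,\omega,\sigma)=-f\bigl(u_\ast^-,\sigma u_\ast^-,\mu\bigr)=0$$
by~\eqref{e:eqb}. The chain rule gives the derivative in $v^-$ at $0$. The linear part contributes $D(\omega)$. The Nemytskii part has derivative
$$h\mapsto \partial_1 f_\ast h+\partial_2 f_\ast D(\omega,\sigma)^{1/2}h,$$
with constant coefficients since the base point is $s$-independent. Subtracting gives the stated formula for $\partial_1F(0,\mu,\omega,\sigma)$.
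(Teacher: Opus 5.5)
Your proposal is correct and follows essentially the paper's route: Fourier multipliers that are bounded and depend smoothly on $(\omega,\sigma)$, composed with a smooth superposition operator on a Sobolev algebra; the paper uses $H^{3/2}$ as the algebra (which also feeds the compactness argument in Lemma~\ref{lem:kerL}), where you use $H^1$. Your explicit restriction to $\omega$ bounded away from $0$ sharpens the statement, since smoothness fails at $\omega=0$ (there the nonzero-mode symbol reduces to $\sqrt{\sigma^2}=|\sigma|$, which is not smooth at $\sigma=0$), so the lemma's stated domain $\omega\in\bbR$ is too generous.
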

\begin{proof}
    The operators $D(\omega)$ and $D(\omega, \sigma)^{1/2}$ are smooth, in fact analytic in $\omega$ as bounded maps from $H^2$ into $H^1$ or $H^{3/2}$, respectively, as one can readily see by finding the (complex) derivative in $\omega$ and $\sigma$. On the other hand, the superposition operator induced by $f$ is smooth as a map on $H^{3/2}$ which is an algebra.
\end{proof}

\paragraph{Properties of the linearization.}

The derivative with respect to the first component, evaluated at the critical point $(0, 0, \omega_\ast, \sigma_\ast)$, turns out not to be invertible and therefore indicates a bifurcation point, due to Assumption~\ref{assu:degen}. The next lemma summarizes properties of this derivative viewed as a linear operator $\cL :=\partial_1 F(0, 0, \omega_\ast, \sigma_\ast):H^2(S^1,\bbR)\to H^1(S^1,\bbR)$.
\begin{lemma}[Fredholm  properties of $\cL$] \label{lem:kerL}
    The linear operator $\cL$ has Fredholm index 0 and its kernel is spanned by $e^{is}$ and $e^{-is}$ so that $\dim_\bbR(\mathrm{ker}(\cL)) = 2$. Furthermore, $\mathrm{range}(\cL)^\perp = \ker(\partial_1 F(0, 0, \omega_\ast, \sigma))$, where $\perp$ refers to the $L^2$-scalar product. 
\end{lemma}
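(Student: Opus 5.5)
The plan is to diagonalize $\cL$ in Fourier space. For $v^-=\sum_\ell \hat v_\ell e^{i\ell s}$ with $\hat v_{-\ell}=\overline{\hat v_\ell}$ (real-valuedness), the operators $D(\omega_\ast)$ and $D(\omega_\ast,\sigma_\ast)^{1/2}$ act as Fourier multipliers with symbols $i\omega_\ast\ell$ and $\sqrt{i\omega_\ast\ell+\sigma_\ast^2}$. The symbol for $\ell=0$ is $\sigma_\ast$, consistent with the branch $\sqrt{\sigma_\ast^2}=\sigma_\ast$ when $\sigma_\ast\ge0$; for $\sigma_\ast<0$ one uses the definition of the multiplier at $\ell=0$ given in~\eqref{eq:def_D}. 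Hence $\cL$ is the multiplier with symbol
\[
m(\ell):=i\omega_\ast\ell-\partial_1f_\ast-\partial_2f_\ast\sqrt{i\omega_\ast\ell+\sigma_\ast^2}=\dm_\ast(i\omega_\ast\ell),\qquad \ell\neq0,
\]
and $m(0)=-\partial_1f_\ast-\partial_2f_\ast\sigma_\ast$, which is nonzero by the remark following Assumption~\ref{assu:degen}.

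\textbf{Kernel.} By Assumption~\ref{assu:degen}\,(1), $m(\pm1)=0$; for $\ell=-1$ this follows by complex conjugation, since $\dm_\ast(\bar\lambda)=\overline{\dm_\ast(\lambda)}$ away from the cut. By Assumption~\ref{assu:degen}\,(2), $m(\ell)\neq0$ for $|\ell|\ge2$, because $|\ell\omega_\ast|\neq\omega_\ast$. Hence $\ker\cL$ consists of $\hat v_1e^{is}+\overline{\hat v_1}e^{-is}$, i.e.\ it is spanned over $\R$ by $\cos s$ and $\sin s$, equivalently by $e^{\pm is}$. This gives $\dim_\R\ker\cL=2$.

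\textbf{Fredholm index.} Write $\cL=\cL_0+K$ with $\cL_0=D(\omega_\ast)+\mathrm{Id}$, symbol $i\omega_\ast\ell+1$, which is an isomorphism $H^2\to H^1$. Then $K=-(1+\partial_1f_\ast)-\partial_2f_\ast D(\omega_\ast,\sigma_\ast)^{1/2}$ is bounded $H^2\to H^{3/2}$, and $H^{3/2}\hookrightarrow H^1$ is compact on $S^1$. So $K$ is compact and $\cL$ is Fredholm of index $0$.

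A more hands-on alternative, which I would also record, uses $|m(\ell)|\ge c|\ell|$ for $|\ell|$ large, since the $\sqrt{|\ell|}$ term is lower order. Combined with $m(\ell)\neq0$ for $|\ell|\ne1$, this gives $\inf_{|\ell|\neq1}|m(\ell)|/\langle\ell\rangle>0$. Consequently the range is exactly $\{g\in H^1:\hat g_{\pm1}=0\}$: given such $g$, set $\hat v_\ell=\hat g_\ell/m(\ell)$ for $|\ell|\ne1$, and this lies in $H^2$ by the bound. So the range is closed with codimension $2$, which confirms index $0$.

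\textbf{Range characterization.} With respect to the $L^2$ pairing, $\range(\cL)^\perp$ is the set of $w$ with $\hat w_\ell=0$ for $|\ell|\neq1$, i.e.\ $\mathrm{span}\{\cos s,\sin s\}$. This coincides with $\ker\cL$. An equivalent route is to note that the $L^2$-adjoint has symbol $\overline{m(\ell)}$, which vanishes exactly at $\ell=\pm1$.

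The main obstacle I anticipate is reconciling this with the statement's ``$\ker(\partial_1F(0,0,\omega_\ast,\sigma))$'' at general $\sigma$. I would read it as $\sigma=\sigma_\ast$, since for $\sigma\neq\sigma_\ast$ near $\sigma_\ast$ with $\mu=0$ and $\omega=\omega_\ast$ the kernel is generically trivial. Alternatively, I would prove the statement that the $L^2$-orthogonal complement of the range is $\mathrm{span}\{e^{\pm is}\}=\ker\cL$, which is what the subsequent Lyapunov--Schmidt reduction uses. A minor further point is the convention for the $\ell=0$ multiplier $\sigma$ when $\sigma_\ast\le0$; it only needs $m(0)\neq0$, which the equilibrium nondegeneracy supplies.
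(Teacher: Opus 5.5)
Your proposal is correct and takes essentially the same route as the paper. You diagonalize $\cL$ in Fourier modes with symbol $\dm_\ast(i\omega_\ast\ell)$, get index $0$ from an invertible principal part ($D(\omega_\ast)\pm 1$) plus a compact perturbation bounded from $H^2$ into $H^{3/2}\hookrightarrow H^1$, and identify $\range(\cL)^\perp$ through the adjoint's conjugate symbol, which vanishes only at $\ell=\pm1$. Your extra direct range characterization via $|m(\ell)|\geq c\langle\ell\rangle$ is also sound. So is your reading of $\sigma$ as $\sigma_\ast$ in the last claim, which matches what the paper's proof actually establishes: the cokernel coincides with $\ker\cL$.
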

\begin{proof} 
    The principal part $D(\omega_\ast)$ of $\cL$ minus the identity, $D(\omega_\ast)-1$, is bounded invertible, as can be seen directly using Fourier series, hence Fredholm of index 0. 
    The remaining terms of $\mathcal{L}$ are compact since they can be considered as bounded operators from $H^2$ into $H^{3/2}$, which in turn is compactly embedded into $H^1$, 
    so that $\cL$ is Fredholm of index 0. Any element in the kernel of $\cL$ can be written as a Fourier series. The action of $\cL$ on Fourier coefficients is diagonal, 
    \begin{equation*}
        \cL e^{i\ell s} = \left(i\omega_\ast \ell - \partial_1 f_\ast - \partial_2 f_\ast \sqrt{i\omega_\ast \ell + \sigma_\ast^2}\right) e^{i\ell s} = \dm(i\omega_\ast\ell; 0, \sigma_\ast) e^{i\ell s},\ \text{ with }\dm \text{ from \eqref{eq:dlambdaIFT},}
    \end{equation*}
    so that  $\cL e^{i\ell s}=0$ by Assumption~\ref{assu:degen} when $|\ell|=1$. The kernel of $\cL$ therefore is spanned by $e^{\pm i s}$, which, restricting to real functions, gives the result. The $L^2$-adjoint of $\cL$ possesses the same form, replacing simply $\ell$ by $-\ell$, so that the kernel of the adjoint and hence the orthocomplement of the range are identical to the kernel of $\cL$.
\end{proof}
\begin{remark}[Non-resonance]\label{r:nonres}
Inspecting the proof, one notices that it is sufficient to assume that $\dm(i\ell\omega_\ast)\neq 0$ for $\ell\in\Z$, $|\ell|\neq 1$, rather than for $\ell\in\R$, $|\ell|\neq 1$. The remainder of the proof then indeed goes through. The stronger condition (2) in Assumption~\ref{assu:degen} is needed in the stability proof, Section~\ref{sec:stab}.
\end{remark}

\paragraph{Lyapunov-Schmidt splitting and reduction.}

We denote the two-dimensional kernel of $\cL$ mentioned in Lemma~\ref{lem:kerL} by $E_c^2= \mathrm{span}\{\cos(s), \sin(s)\}$, and we write  $E_h^2$ for its $L^2$-orthocomplement, so that we have $H^2(S^1, \bbR) = E_c^2 \oplus E_h^2$. We can consider the two-dimensional kernel of $\cL$ and its complement as subspaces of $H^1(S^1, \bbR)$ as well, denoting them respectively by $E_c^1$ and $E_h^1$ with $H^1(S^1, \bbR)= E_c^1 \oplus E_h^1$.
We also introduce the associated projections,
\begin{subequations}
    \begin{eqnarray}
        P_1: H^1(S^1, \bbR) \to H^1(S^1, \bbR), \quad P_1 E_c^1 = E_c^1, \quad P_1 E_h^1 = \{0\}, \\
        P_2: H^2(S^1, \bbR) \to H^2(S^1, \bbR), \quad P_2 E_c^2 = E_c^2, \quad P_2 E_h^2 = \{0\}.
    \end{eqnarray}
\end{subequations}
By Lemma~\ref{lem:kerL}, we have that $\range(P_2) = \ker(\cL)$ and $\ker(P_1) = \range(\cL)$. Correspondingly, we also split the variable $v^-$ into
\begin{equation}
    v^- = v^-_c + v^-_h, \quad v^-_c\in E_c^2, \; v^-_h \in E_h^2
\end{equation}
and decompose the nonlinearity as
\begin{equation}
\begin{aligned}
    G_c(v^-_c, v^-_h; \mu, \omega, \sigma) &:= P_1 F(v^-_c + v^-_h, \mu, \omega, \sigma):&\ E_c^2\times E_h^2\times\bbR^2\to E_c^1,    \\
    G_h(v^-_c, v^-_h; \mu, \omega, \sigma) &:= (1-P_1) F(v^-_c + v^-_h, \mu, \omega, \sigma):&\ E_c^2\times E_h^2\times\bbR^2\to E_h^1.
\end{aligned}
\end{equation}
as well as 
\begin{equation}
    \begin{aligned}
        \partial_1 G_h(0, 0; \mu, \omega, \sigma) &= (1-P_1) \cL(\mu, \omega, \sigma):& \; E_c^2 \to E_h^1,\\ \quad \partial_2 G_h(0, 0; \mu, \omega, \sigma) &= (1-P_1) \cL(\mu, \omega, \sigma): &\; E_h^2 \to E_h^1.
    \end{aligned}
\end{equation}
where $\cL(\mu, \omega, \sigma) := \partial_1 F(0, \mu, \omega, \sigma)$.
Hence, at $\mu = 0$, $\sigma = \sigma_\ast$, and $\omega=\omega_\ast$,
\begin{equation}
    \partial_1 G_c = 0, \quad \partial_2 G_c = 0, \quad \partial_1 G_h = 0 .
\end{equation}
In the following, we fix $\sigma = \sigma_\ast$ and suppress $\sigma$-dependence in the notation since it simply adds a free parameter to the results. Since $\cL$ is Fredholm of index 0, a bordering theory for Fredholm operators implies that $\partial_2 G_h$ is Fredholm of index 0 (cf.~\cite{sandstede2008relative},~\cite{teschl2020topics}). Since moreover $\partial_2 G_h$ has trivial kernel in $E_h^2$, it is in fact bounded invertible. We may then write our equation~\eqref{eq:bi2} as the system
\begin{eqnarray}
    \begin{pmatrix}
        G_c \\ G_h
    \end{pmatrix}
    = 
    \begin{pmatrix}
        0 &0 \\
        0 &\partial_2 G_h
    \end{pmatrix}
    \begin{pmatrix}
        v^-_c \\
        v^-_h
    \end{pmatrix}
    + A_\omega (\omega - \omega_\ast) 
    \begin{pmatrix}
        v^-_c \\ v^-_h
    \end{pmatrix}
    + A_\mu \mu
    \begin{pmatrix}
        v^-_c \\ v^-_h
    \end{pmatrix}
    + \cO(|v^-_c|^2 + |v^-_h|^2).
\end{eqnarray}
with  linear operators $A_\omega$ and $A_\mu$. The invertibility of $\partial_2 G_h$ at $\mu=0$ now allows us to use the Implicit Function Theorem for the equation $G_h(v^-_c, v^-_h; \mu, \omega) = 0$ and solve for $v^-_h$ as a unique function of the remaining parameters $(v^-_c, \mu, \omega)$, that is,  to find the existence of a smooth and unique function $\psi(v^-_c, \mu, \omega)$, defined in a neighborhood of the origin, such that $G_h(v^-_c,\psi(v^-_c, \mu, \omega); \mu,\omega)=0$. 
From uniqueness and the fact that $F(0,\mu,\omega)=0$, we conclude that $0 = \psi(0, \mu, \omega)$, so that $\psi \in \cO(|\mu v^-_c|, |(\omega-\omega_\ast)v^-_c|, |v^-_c|^2)$. 

\paragraph{Reduced equation and proof of Theorem~\ref{theorem:hopf}. }
It remains to solve $G_c=0$, with $v^-_h$ given through $\psi$. We now choose coordinates in the kernel setting  $v^-_c = r\cos(s)$ and thus fixing a time-$s$ translate. We find, using the basis $\{e^{i\ell s}, e^{-i\ell s}\}$ of its range together with perturbation $v^-_c = \frac{r}{2}e^{is} + \frac{r}{2} e^{-is}$,
\begin{equation}
    g_c(r, \mu, \omega, \sigma) := G_c(r\cos(\cdot), \psi(r\cos(\cdot); \mu, \omega, \sigma); \mu, \omega, \sigma) = g^c_1 e^{is} + g_2^c e^{-is} \in  E_c^1\sim \bbR^2 \; ,
\end{equation}
and, of course, $g_1^c = \overline{g_2^c}$. 
Since $F(0,\mu,\omega, \sigma)=0$, we immediately find that $g_c(0, \mu, \omega, \sigma) = 0$. Expanding about the bifurcation point, we find 
\begin{equation}\label{e:gcred}
        g_1^c(r,\mu,\omega, \sigma) = \partial_2 \dm_\ast \cdot \frac{r}{2}\mu + i\; \partial_1 \dm_\ast \cdot \frac{r}{2} (\omega-\omega_\ast) + \cO\left(r^3+\mu^2r+(\omega-\omega_\ast)^2r\right),
\end{equation}
where ``$\ast$'' denotes evaluation at $(\lambda, \mu) = (i\omega_\ast, 0)$. Dividing by the trivial factor $r$, invoking Hadamard's lemma, we can solve for $(\mu,\omega)$ near $(0, \omega_\ast)$ and $r=0$ as a function of the parameter $r$ using the Inverse Function Theorem if $\partial_2 \dm$ and $i\; \partial_1 \dm$, evaluated at $(\mu, \sigma, \omega) = (0, \sigma_\ast, \omega_\ast)$ are linearly independent over $\bbR$ as elements of $\bbC$. To establish that fact, suppose the contrary is true, that is, $\partial_2\dm_\ast = \rho\cdot i\;\partial_1\dm_\ast$ for some $\rho\in\bbR$. Then, using that 
$\partial_\mu \lambda_\ast = -\partial_2\dm_\ast / \partial_1 \dm_\ast$ by implicit differentiation,  
\begin{equation*}
    \Re\left(\partial_\mu\lambda_\ast\right) = -\Re\left(\frac{\rho \,i\; \partial_1\dm_\ast \overline{\partial_1\dm_\ast}}{\partial_1\dm_\ast\overline{\partial_1\dm_\ast}} \right) = 0,
\end{equation*}
hence a contradiction to Assumption~\ref{assu:bif}.

In summary, we can solve $\frac{1}{r}g_1^c = 0$ with the Inverse Function Theorem for $\mu = \mu(r, \sigma)$ and $\omega = \omega(r, \sigma)$. 

 Writing $\mu=\mu_{\mathrm{nl}}(r, \sigma)$ and $\omega=\omega_{\mathrm{nl}}(r, \sigma)$ for the solutions, our perturbation solution is then of the form 
\[
v^-(s;r, \sigma)=r\cos(s)+\psi(r\cos(\cdot),\mu_\mathrm{nl}(r, \sigma),\omega_\mathrm{nl}(r, \sigma), \sigma)(s).
\]
Substituting back, this yields $u^-(t;r, \sigma)=u^-(\mu_\mathrm{nl}(r, \sigma))+v^-(\omega_\mathrm{nl}(r, \sigma)t;r, \sigma)$, at the parameter value $\mu=\mu_\mathrm{nl}(r,\sigma)$.

In order to establish that $\omega_\mathrm{nl}'(0, \sigma)=0$ and $\mu_\mathrm{nl}'(0, \sigma)=0$,
we note that 
\begin{eqnarray*}
v^-(s;-r, \sigma)&=&-r\cos(s)+\psi(-r\cos(\cdot),\mu_\mathrm{nl}(r, \sigma),\omega_\mathrm{nl}(r, \sigma), \sigma)(s)\\
&=&r\cos(s+\pi)+\psi(r\cos(\cdot+\pi),\mu_\mathrm{nl}(r, \sigma),\omega_\mathrm{nl}(r, \sigma), \sigma)(s)\\
&=&r\cos(s+\pi)+\psi(r\cos(\cdot),\mu_\mathrm{nl}(r, \sigma),\omega_\mathrm{nl}(r, \sigma), \sigma)(s+\pi)\\
 &=& v^-(s+\pi;r, \sigma),
\end{eqnarray*}
where in the third equality we used uniqueness of the solution. In particular, there is a solution with the same frequency and parameter value for $-r$, which by uniqueness implies that $\mu_\mathrm{nl}$ and $\omega_\mathrm{nl}$ are even, with vanishing derivatives at the origin as claimed. 

Reconstructing $u(t,x)$ from $u^-$, we see that $u_\infty=\lim_{x\to\infty}u(t,x)\equiv\hat{u}^0_0$, that is, the limit is given by the zeroth Fourier mode, which belongs to $E_h^2$. 
Exponential asymptotics as in~\eqref{e:asy}  follows from the Fourier representation~\eqref{eq:frep}.

\section{Leading-order expansions in an example} \label{sec:examples_nodegrad}

We specialize our setting to boundary kinetics $f$ that are linear in $\partial_n u$ and allow for a trivial equilibrium $u_*\equiv 0$, $u^-_*\equiv 0$, for all parameter values,
\begin{equation} \label{eq:sys_cubicorder_degrad}
    \begin{array}{rcll}
        \partial_t u &=&  \partial_{xx}u - \sigma^2 u, \\
        \dot{u}^-(t) &=& -\alpha u^- + \beta (u^-)^2 + \gamma (u^-)^3 + (\mu + \sqrt{2\alpha}) \; \partial_n u(t, 0),  &  \\
        u^-(t) &=& u(t, 0)
    \end{array}
\end{equation}
with parameters $\alpha>0$, $\beta,\gamma\in\R$, and bifurcation parameter $\mu\in\R$, so that $f(u^-(t), \partial_n u(t,0), \mu) = -\alpha u^- + \beta (u^-)^2 + \gamma (u^-)^3 + (\mu + \sqrt{2\alpha})\; \partial_n u(t,0)$ in~\eqref{eq:sys}.

The choice of coefficient $\mu+\sqrt{2\alpha}$ for $\partial_n u$ ensures that there is in fact a Hopf bifurcation at $\mu=0$. 

\begin{lemma}[Linear Hopf bifurcation]\label{l:linass}
Assume $\alpha>2\sigma_\ast^2\geq 0$. 
Then the system~\eqref{eq:sys_cubicorder_degrad} satisfies Assumptions~\ref{assu:equi}---\ref{assu:bif} with $u_\ast^-(\mu,\sigma)\equiv 0$, $\omega_\ast(\sigma_\ast)=\sqrt{\alpha(\alpha -2\sigma_\ast^2)}$.   In particular, $\Re\partial_\mu\lambda_\ast>0$.
\end{lemma}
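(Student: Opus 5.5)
The plan is to verify the three assumptions by direct computation with the explicit characteristic function. Assumption~\ref{assu:equi} is immediate: $f(0,0,\mu)=0$ for all $\mu$. Since $\dm_\ast(0)\neq0$ (checked below), uniqueness in the Implicit Function Theorem forces the tracked branch to be $u_\ast^-(\mu,\sigma)\equiv0$. Consequently $\partial_1 f_\ast=-\alpha$ and $\partial_2 f_\ast=\mu+\sqrt{2\alpha}$, and~\eqref{eq:dlambdaIFT} becomes
\[
\dm(\lambda;\mu,\sigma)=\lambda+\alpha-(\mu+\sqrt{2\alpha})\sqrt{\lambda+\sigma^2}.
\]

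For Assumption~\ref{assu:degen}, set $\mu=0$, $\sigma=\sigma_\ast$ and $\lambda=i\omega$, and look at the equation $\alpha+i\omega=\sqrt{2\alpha}\sqrt{i\omega+\sigma_\ast^2}$.
\begin{itemize}
\item Squaring gives $\alpha^2-\omega^2+2i\alpha\omega=2\alpha i\omega+2\alpha\sigma_\ast^2$. The imaginary parts cancel identically, leaving $\omega^2=\alpha(\alpha-2\sigma_\ast^2)$. This is positive exactly when $\alpha>2\sigma_\ast^2$, which yields $\omega_\ast=\sqrt{\alpha(\alpha-2\sigma_\ast^2)}$.
\item Squaring can introduce spurious roots, so the branch must be checked. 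The left side $(\alpha+i\omega)/\sqrt{2\alpha}$ has real part $\sqrt{\alpha/2}>0$, so it is the principal root of $i\omega+\sigma_\ast^2$. Hence both $\pm\omega_\ast$ are genuine roots of $\dm_\ast(i\omega)=0$.
\item Every real root of $\dm_\ast(i\omega)=0$ satisfies the squared equation, so $\pm\omega_\ast$ are the only ones; this gives condition (2). In particular $\omega=0$ is not a root, since $\dm_\ast(0)=\alpha-\sqrt{2\alpha}\,\sigma_\ast>0$ because $\sigma_\ast^2<\alpha/2$.
\end{itemize}

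For simplicity (condition (3)), use the root identity $\sqrt{i\omega_\ast+\sigma_\ast^2}=(\alpha+i\omega_\ast)/\sqrt{2\alpha}$ to simplify
\[
\partial_\lambda\dm_\ast=1-\frac{\sqrt{2\alpha}}{2\sqrt{\lambda+\sigma_\ast^2}}=1-\frac{\alpha}{\alpha+i\omega_\ast}=\frac{i\omega_\ast}{\alpha+i\omega_\ast}\neq0 .
\]

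For Assumption~\ref{assu:bif}, note $\partial_\mu\dm=-\sqrt{\lambda+\sigma^2}$. At $\mu=0$ the branch $u_\ast^-\equiv0$ is independent of $\mu$, so no further terms appear. Implicit differentiation gives
\[
\partial_\mu\lambda_\ast=-\frac{\partial_\mu\dm}{\partial_\lambda\dm}
=\frac{(\alpha+i\omega_\ast)^2}{\sqrt{2\alpha}\,i\omega_\ast}
=\frac{\alpha^2-\omega_\ast^2+2i\alpha\omega_\ast}{\sqrt{2\alpha}\,i\omega_\ast}.
\]
Its real part is $2\alpha\omega_\ast/(\sqrt{2\alpha}\,\omega_\ast)=\sqrt{2\alpha}>0$.

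The main point requiring care is the branch check for the square root: squaring can admit spurious roots, and ruling them out is what makes the squared-equation argument valid for both existence and uniqueness of the imaginary roots. The remaining steps are routine algebra.
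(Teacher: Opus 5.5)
Your proof is correct. For Assumptions~\ref{assu:equi} and~\ref{assu:degen} it follows the paper's route: square $\alpha+i\omega=\sqrt{2\alpha}\sqrt{i\omega+\sigma_\ast^2}$ and compare real and imaginary parts. You are somewhat more careful than the paper in two places. First, you check via $\Re\,(\alpha+i\omega_\ast)/\sqrt{2\alpha}=\sqrt{\alpha/2}>0$ that the roots of the squared equation lie on the principal branch, a step the paper passes over. Second, you compute $\partial_\lambda\dm_\ast=i\omega_\ast/(\alpha+i\omega_\ast)$ explicitly, where the paper only notes that it is non-real.

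The genuine difference is in the crossing condition. The paper writes $\partial_\mu\lambda=z^2/(z-\sqrt{\alpha/2})$ with $z=\sqrt{\lambda+\sigma^2}$. It then shows by contradiction that this is never purely imaginary when $\sigma_\ast^2<\alpha/2$; that argument squares and uses only the root equation $z^2-\sqrt{2\alpha}\,z=\sigma^2-\alpha$. Finally, it fixes the sign by continuity in $\sigma_\ast$ and an explicit evaluation at $\sigma_\ast=0$.

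You instead insert the explicit root $z=(\alpha+i\omega_\ast)/\sqrt{2\alpha}$, so that $z-\sqrt{\alpha/2}=i\omega_\ast/\sqrt{2\alpha}$, and read off $\Re\partial_\mu\lambda_\ast=\sqrt{2\alpha}$ directly. This is shorter and needs no homotopy in $\sigma_\ast$. It also gives the exact crossing speed and shows it does not depend on $\sigma_\ast$. This agrees with the paper's value at $\sigma_\ast=0$, since $i\alpha/(\sqrt{i\alpha}-\sqrt{\alpha/2})=\sqrt{2\alpha}$.

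The paper's non-vanishing step, by contrast, does not need the critical root in closed form. That makes it the more portable strategy for kinetics where no explicit solution is available, though it still needs one explicit evaluation to determine the sign.
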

\begin{proof}
    We clearly have the equilibrium $u_\ast^-=0$ and therefore need to check the linear assumptions. We have in this case 
    \begin{equation}\label{e:dm}
    \dm(\lambda;\mu, \sigma)=\lambda+\alpha - (\mu + \sqrt{2\alpha}) \; \sqrt{\lambda + \sigma^2}=0. 
    \end{equation}
    Solving $\dm(i \omega;0,\sigma)=0$, we quickly find 
    \[
    \alpha^2 +i2\alpha\omega - \omega^2 = 2\alpha(\sigma_\ast^2 + i\omega).
    \]
    Taking real and imaginary parts gives $\omega=\omega_\ast=\sqrt{\alpha(\alpha-2\sigma_\ast^2)}$ and establishes  (1) and (2) in~\eqref{e:asspt12}. We also find that
    \[
    \partial_1 \dm(i\omega_\ast;0,\sigma_\ast)=1-\frac{1}{2}\sqrt{2\alpha}(i\omega_\ast +\sigma_\ast^2)^{-1/2},
    \]
    which is real only when $\omega_\ast=0$, thus implying (3) in~\eqref{e:asspt12}. It remains to establish strict crossing.    We therefore set  $\lambda + \sigma^2=\gamma^2$, so that roots of $\dm$ solve  $\gamma^2 - \sigma^2+\alpha - (\mu + \sqrt{2\alpha}) \; \gamma=0$. Differentiating gives, at $\mu=0$, 
    \[
    \partial_\mu\lambda=2\gamma\partial_\mu\gamma=\frac{\gamma^2}{\gamma-\sqrt{\alpha/2}}.
    \]
    Since the denominator is nonzero, this expression is purely imaginary if $\gamma^2=i \tau (\gamma-\sqrt{\alpha/2})$ for some $\tau\in\R$, which after squaring gives
    \[
    \gamma^4=-\tau^2\left(\gamma^2+\frac{\alpha}{2} -\sqrt{2\alpha}\gamma\right)=\tau^2\left(\frac{\alpha}{2}-\sigma^2\right),
    \]
    where we used the equation for $\gamma$ in the last equality. The right-hand side is positive, which implies that $\gamma^2\in\R$, necessarily, which, using $\lambda+\sigma^2=\gamma^2$ and $\lambda\in i \R$, implies $\lambda=0$, a contradiction.
    
    In order to establish the sign of $\Re\partial_\mu \lambda$, it is then sufficient to evaluate at $\sigma=0$,  when $\partial_\mu\lambda=\frac{i\alpha}{\sqrt{i\alpha}-\sqrt{\alpha/2}}$, so that $\Re\partial_\mu\lambda>0$.
\end{proof}
We now turn to the nonlinear equation~\eqref{eq:sys_cubicorder_degrad} with $\beta$ and $\gamma$ possibly nonzero. By Lemma~\ref{l:linass}, the assumptions of Theorem~\ref{theorem:hopf} hold and we have a branch of solutions with
\begin{equation}\label{e:mu2def}
\mu=\mu_2 r^2+\cO(r^4),\qquad \qquad  
\omega=\omega_\ast(\sigma_\ast)+\omega_2 r^2+\cO(r^4),\qquad \text{and}
\qquad u_\infty=u_{\infty,2} r^2+\cO(r^4).
\end{equation}
When computing the expansions, the action of the linearization on Fourier modes contributes key coefficients. We therefore define coefficients as 
\begin{equation}\label{e:lincoeff}
    \Lambda_\ell=\dm(i\omega_\ast \ell;0,\sigma_\ast), \qquad \Lambda_{1,\mu}=\-\partial_2\dm(i\omega_\ast \ell;0,\sigma_\ast), \quad \text{and}\quad  \Lambda_{1,\omega}=i\partial_1 \dm(i\omega_\ast ;0,\sigma_\ast).
\end{equation}

\begin{theorem}[Expansion of bifurcating branch] \label{cor:m_w_cubicorder_degrad}
    Fix $\sigma_\ast\geq 0$ and consider~\eqref{eq:sys_cubicorder_degrad} with $\mu\sim 0$. Recall the definition of $\dm$ in~\eqref{e:dm}. Then the bifurcating branch has the expansion~\eqref{e:mu2def} with
    \begin{equation}   \label{e:exp}
        \mu_2= \frac{\Im(M \overline{\Lambda_{1,\omega}})}{\Im(\Lambda_{1,\omega} \overline{\Lambda_{1,\mu}})},   \qquad
        \omega_2=\frac{\Im(M \overline{\Lambda_{1,\mu}})}{\Im(\Lambda_{1,\omega} \overline{\Lambda_{1,\mu}})},\qquad
        u_{\infty,2}= \frac{\beta}{2\Lambda_0},
    \end{equation}
    where we used the definitions in~\eqref{e:lincoeff} and the shorthand
    \[
    M=  \frac{3}{4}\gamma + \beta^2(\Lambda_0^{-1}+(2\Lambda_2)^{-1}).
    \]
\end{theorem}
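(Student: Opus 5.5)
We expand the Lyapunov--Schmidt reduction from the proof of Theorem~\ref{theorem:hopf} to third order in $r$, specialized to the boundary kinetics in~\eqref{eq:sys_cubicorder_degrad}, where $u_\ast^-\equiv 0$. In Fourier modes the equation $F=0$ reads
\[
\dm(i\omega\ell;\mu,\sigma_\ast)\,\hat v_\ell=\beta\,\widehat{(v^2)}_\ell+\gamma\,\widehat{(v^3)}_\ell,\qquad \ell\in\Z .
\]
Here the only linear dependence on $\mu$ enters through $-\mu\sqrt{i\omega\ell+\sigma_\ast^2}$, so $\partial_\mu$ of the $\ell$-th symbol is $\partial_2\dm(i\omega\ell)$. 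We make the ansatz
\[
v=r\cos s+r^2 v_2+r^3 v_3+\cO(r^4),\qquad \mu=\mu_2r^2+\cO(r^4),\qquad \omega=\omega_\ast+\omega_2r^2+\cO(r^4).
\]
This is justified by evenness of $\mu_{\mathrm{nl}},\omega_{\mathrm{nl}}$, and we normalize so that $v_2,v_3$ have no $e^{\pm is}$ component, matching $\psi\in E_h^2$.

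The first step is the order $r^2$ equation, which is posed on $E_h$ and involves only the linearization $\Lambda_\ell=\dm(i\omega_\ast\ell;0,\sigma_\ast)$. Since $\cos^2 s=\tfrac12+\tfrac14(e^{2is}+e^{-2is})$, we obtain
\[
\hat v_{2,0}=\frac{\beta}{2\Lambda_0},\qquad \hat v_{2,\pm2}=\frac{\beta}{4\Lambda_{\pm2}},
\]
and $\Lambda_0,\Lambda_{\pm2}\neq0$ by Assumption~\ref{assu:degen}. The zeroth Fourier mode equals $u_\infty$ by the final paragraph of the proof of Theorem~\ref{theorem:hopf}, so $u_{\infty,2}=\beta/(2\Lambda_0)$ follows immediately. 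At this order we also check that $\mu$ and $\omega-\omega_\ast$ do not enter, which holds because they only multiply $v=\cO(r)$.

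The second step projects the order $r^3$ equation onto $e^{is}$, which gives the expansion of $g_1^c$ in~\eqref{e:gcred} with the cubic coefficient made explicit. On the linear side, the $e^{is}$ coefficient of $r\cos s$ is $r/2$, so the parameter terms contribute
\[
\tfrac r2\bigl(\partial_2\dm_\ast\,\mu_2r^2+i\,\partial_1\dm_\ast\,\omega_2r^2\bigr).
\]
The factor $i$ comes from $\partial_\omega\dm(i\omega)=i\partial_\lambda\dm$. On the nonlinear side we collect the $e^{is}$ coefficients of $2\beta\cos s\,v_2$ and of $\gamma\cos^3 s$. Using $\cos^3 s=\tfrac34\cos s+\tfrac14\cos 3s$, the cubic term gives $\tfrac38\gamma$. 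The quadratic term gives $\beta\bigl(\hat v_{2,0}+\hat v_{2,2}\bigr)$, which equals $\tfrac{\beta^2}{2}\bigl(\Lambda_0^{-1}+(2\Lambda_2)^{-1}\bigr)$. Altogether the nonlinear contribution is $\tfrac{r^3}{2}M$, with $M$ as in the statement. Dividing by $r^3/2$ and using the sign conventions of~\eqref{e:lincoeff} for $\Lambda_{1,\mu}$ and $\Lambda_{1,\omega}$ yields a single complex equation with real unknowns $\mu_2,\omega_2$:
\[
\Lambda_{1,\mu}\,\mu_2+\Lambda_{1,\omega}\,\omega_2=M .
\]
We solve it by Cramer's rule over $\R^2\cong\bbC$. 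Taking $\Im(\,\cdot\,\overline{\Lambda_{1,\omega}})$ eliminates $\omega_2$, and taking $\Im(\,\cdot\,\overline{\Lambda_{1,\mu}})$ eliminates $\mu_2$; the denominators are $\pm\Im(\Lambda_{1,\omega}\overline{\Lambda_{1,\mu}})$. This quantity is nonzero by the real linear independence already established in the proof of Theorem~\ref{theorem:hopf} via Lemma~\ref{l:linass}.

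The main obstacle is purely bookkeeping. The sign conventions in~\eqref{e:lincoeff}, including the stray minus in $\Lambda_{1,\mu}$, have to be tracked, and so does the orientation of the imaginary parts, so that the formulas come out exactly as in~\eqref{e:exp} rather than with flipped signs. We would verify these signs by checking the linear case $\beta=\gamma=0$, where $M=0$ forces $\mu_2=\omega_2=0$, and by comparing the resulting $\mu_2$-equation with $\Re\partial_\mu\lambda_\ast>0$. A further point to confirm is that no $\cO(r^2)$ correction of $\mu$ or $\omega$ feeds back at order $r^3$ through $v_2$; this holds because such terms are of order at least $r^4$.
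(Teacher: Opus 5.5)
Your route is the paper's own: you solve the order-$r^2$ Fourier equations to get $v_{2,0}=\beta/(2\Lambda_0)$ and $v_{2,\pm2}=\beta/(4\Lambda_{\pm2})$, hence $u_{\infty,2}$. You then project the order-$r^3$ equation onto $e^{is}$, and your nonlinear contribution $\tfrac{r^3}{2}M$ is correct. The step that does not close is the sign at the end, which you flag but leave open. What you actually derive is $\partial_2\dm_\ast\,\mu_2+i\,\partial_1\dm_\ast\,\omega_2=M$. Suppose you read $\Lambda_{1,\mu}=\partial_2\dm_\ast$, as your displayed equation does and as the paper's~\eqref{e:O3} is written. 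Then Cramer's rule gives $\omega_2$ as stated, but
\[
\mu_2=\frac{\Im(M\overline{\Lambda_{1,\omega}})}{\Im(\Lambda_{1,\mu}\overline{\Lambda_{1,\omega}})}=-\frac{\Im(M\overline{\Lambda_{1,\omega}})}{\Im(\Lambda_{1,\omega}\overline{\Lambda_{1,\mu}})}.
\]
This is exactly your remark that the two denominators are $\pm\Im(\Lambda_{1,\omega}\overline{\Lambda_{1,\mu}})$, and it contradicts~\eqref{e:exp}, where both formulas share one denominator. The statement holds precisely when $\Lambda_{1,\mu}:=-\partial_2\dm(i\omega_\ast;0,\sigma_\ast)$, so the minus in~\eqref{e:lincoeff} is genuine. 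Your equation then reads $-\Lambda_{1,\mu}\mu_2+\Lambda_{1,\omega}\omega_2=M$. Taking $\Im(\,\cdot\,\overline{\Lambda_{1,\omega}})$ and $\Im(\,\cdot\,\overline{\Lambda_{1,\mu}})$ then yields both formulas in~\eqref{e:exp}.

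Neither of your proposed checks would catch this. For $\beta=\gamma=0$ both sides vanish under either convention. The $\omega_2$-formula is invariant under $\Lambda_{1,\mu}\mapsto-\Lambda_{1,\mu}$, so only $\mu_2$ sees the sign, and that sign is the whole super/subcritical content. A decisive check is $\sigma_\ast=0$, where $\partial_2\dm_\ast=-\sqrt{i\alpha}=-\sqrt{\alpha/2}\,(1+i)$ and $\Lambda_{1,\omega}=(i-1)/2$. The real and imaginary parts of your equation give $-\sqrt{2\alpha}\,\mu_2=\Re M+\Im M$ and $\omega_2=\Im M-\Re M$. With $\Lambda_0=\alpha$ and $\Lambda_2=\alpha(1-\sqrt2+(2-\sqrt2)i)$, these reproduce Corollary~\ref{cor:m_w_cubicorder}, and they agree with~\eqref{e:exp} only under the minus convention. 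Once you fix that convention, your argument is complete.
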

\begin{proof}
    We expand the perturbation  $v$ in the parameter $r$ and in a Fourier series as
    \begin{equation}
        v(s;r)=\frac{r}{2} e^{i s}+\frac{r}{2} e^{-i s} + v_{2,2}r^2 e^{2 is}+v_{2,0}r^2 +v_{2,-2}r^2 e^{-2 is}+ \sum_{j=-3}^3 v_{3,j}r^3 e^{i j s}.
    \end{equation}
    Clearly, $v_{\ell,j}=\overline{v_{\ell,-j}}$ since $v$ is real. 
    Using the terminology from~\eqref{e:lincoeff}, we find at order $r^2$ the equations in the Fourier modes $e^{2 i s},\ 1,$ and $e^{-2is}$, respectively, 
    \begin{equation}\label{e:v2}
        \begin{aligned}
            \Lambda_2 v_{2,2} - \frac{1}{4}\beta&=0,\\
            \Lambda_0 v_{2,0} - \frac{1}{2}\beta&=0,\\
            \Lambda_{-2} v_{2,-2} - \frac{1}{4}\beta&=0,            
        \end{aligned}
    \end{equation}
    resulting in 
    \begin{equation}
        v_{2,2}=\overline{v_{2,-2}}=\beta/(4\Lambda_2),\qquad v_{2,0}=\beta/(2\Lambda_0).
    \end{equation}
    At order $r^3$ we find the equation for the mode $e^{i s}$ in the form 
    \begin{equation}\label{e:O3}
        \frac{1}{2}\Lambda_{1,\mu}\mu_2 +  
        \frac{1}{2}\Lambda_{1,\omega}\omega_2 -  
        \beta(v_{2,0}+v_{2,2})-\frac{3}{8}\gamma =0.
    \end{equation}
    Setting 
    \[
    M= \beta(2v_{2,0}+2v_{2,2})+\frac{3}{4}\gamma =  \beta^2(\Lambda_0^{-1}+(2\Lambda_2)^{-1})+\frac{3}{4}\gamma,
    \]
    we can then solve for $\mu_2$ and $\omega_2$. We multiply~\eqref{e:O3} by $\overline{\Lambda_{1,\omega}}$ and  $\overline{\Lambda_{1,\mu}}$, respectively, then taking real parts, to find the expansions in~\eqref{e:exp}
    \begin{equation}
            \omega_2=\frac{\Im(M \overline{\Lambda_{1,\mu}})}{\Im(\Lambda_{1,\omega} \overline{\Lambda_{1,\mu}})},\qquad \mu_2=\frac{\Im(M \overline{\Lambda_{1,\omega}})}{\Im(\Lambda_{1,\omega} \overline{\Lambda_{1,\mu}})}.         
    \end{equation}
    The coefficient of $u_\infty$ is simply $v_{2,0}$, corresponding to the zeroth Fourier mode which is the only mode with possibly neutral decay. 
\end{proof}
\begin{cor}[Expansions without degradation]\label{cor:m_w_cubicorder}
    Setting $\sigma=0$ in the expansions in Theorem~\ref{cor:m_w_cubicorder_degrad}, we find explicitly for the coefficients from~\eqref{e:mu2def},
      \begin{equation}   \label{e:exp2}
        \mu_2= \frac{1}{\sqrt{\alpha}} \left( 
        \frac{\beta^2}{\alpha}\left(\frac{1}{3}-\frac{1}{2\sqrt{2}}\right) -\frac{3}{4\sqrt{2}}\gamma\right),  \qquad
        \omega_2= -\left( \frac{7\beta^2}{6\alpha} + \frac{3\gamma}{4}\right)  ,\qquad
        u_{\infty,2}=\frac{\beta}{2\alpha}.   
    \end{equation}
\end{cor}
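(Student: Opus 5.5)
The plan is direct substitution: set $\sigma=\sigma_\ast=0$ in the formulas~\eqref{e:exp} of Theorem~\ref{cor:m_w_cubicorder_degrad}, evaluate the four linear coefficients from~\eqref{e:lincoeff} in closed form, and simplify. By Lemma~\ref{l:linass}, with $\sigma_\ast=0$ we have $\omega_\ast=\alpha$ and
\[
\dm(\lambda;\mu,0)=\lambda+\alpha-(\mu+\sqrt{2\alpha})\sqrt{\lambda}.
\]
Throughout I use the principal branch, so that $\sqrt{i}=(1+i)/\sqrt2$.

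The first step computes the coefficients.
\begin{itemize}
\item At the zeroth mode, $\Lambda_0=\dm(0;0,0)=\alpha$. This immediately gives $u_{\infty,2}=\beta/(2\Lambda_0)=\beta/(2\alpha)$.
\item At the second harmonic, $\sqrt{2i\alpha}=\sqrt{2\alpha}\,(1+i)/\sqrt2$, so
\[
\Lambda_2=\dm(2i\alpha;0,0)=\alpha\bigl[(1-\sqrt2)+i(2-\sqrt2)\bigr].
\]
\item For the $\mu$-derivative, $\partial_\mu\dm=-\sqrt{\lambda}$. At $\lambda=i\alpha$ this gives $\mp\sqrt{\alpha}\,(1+i)/\sqrt2$, with the sign fixed by the convention in~\eqref{e:lincoeff}.
\item For the $\omega$-derivative, $\partial_\lambda\dm=1-\sqrt{2\alpha}/(2\sqrt{\lambda})$. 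At $\lambda=i\alpha$ this equals $1-(1-i)/2=(1+i)/2$, so $\Lambda_{1,\omega}=i(1+i)/2=(-1+i)/2$.
\end{itemize}

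The second step evaluates $M=\tfrac34\gamma+\beta^2\bigl(\Lambda_0^{-1}+(2\Lambda_2)^{-1}\bigr)$. For this I rationalize $1/\Lambda_2$. Since $|\Lambda_2|^2/\alpha^2=(1-\sqrt2)^2+(2-\sqrt2)^2=9-6\sqrt2=3(3-2\sqrt2)$, we get
\[
\frac{1}{\Lambda_2}=\frac{(1-\sqrt2)-i(2-\sqrt2)}{3\alpha(3-2\sqrt2)}.
\]
Multiplying numerator and denominator by $3+2\sqrt2$ simplifies this to
\[
\frac{1}{\Lambda_2}=-\frac{1+\sqrt2}{3\alpha}-\frac{i\sqrt2}{3\alpha}.
\]
Hence
\[
M=\tfrac34\gamma+\frac{\beta^2}{\alpha}\Bigl(\tfrac{5-\sqrt2}{6}-\tfrac{i\sqrt2}{6}\Bigr).
\]

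The third step forms the imaginary parts in~\eqref{e:exp}. First I compute the common denominator $\Im(\Lambda_{1,\omega}\overline{\Lambda_{1,\mu}})$. Then I compute $\Im(M\overline{\Lambda_{1,\omega}})$ and $\Im(M\overline{\Lambda_{1,\mu}})$, each linear in the real and imaginary parts of $M$. Taking the quotients should give $\omega_2=-(7\beta^2/(6\alpha)+3\gamma/4)$, in which the $\sqrt2$ contributions cancel. The same computation should give the stated $\mu_2$, which carries the overall factor $1/\sqrt\alpha$ because $|\Lambda_{1,\mu}|\sim\sqrt\alpha$.

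The main obstacle is bookkeeping of signs rather than anything conceptual. Three points need care: the sign convention for $\Lambda_{1,\mu}$ (the definition in~\eqref{e:lincoeff} carries a stray minus), the exact way the real and imaginary parts of~\eqref{e:O3} are solved, and the branch of $\sqrt{2i\alpha}$. I would fix these by cross-checking against two independent facts. The first is $\Re\partial_\mu\lambda_\ast>0$ from Lemma~\ref{l:linass}. The second is the elementary limit $\beta=0$: there the cubic coefficient alone should give $\omega_2=-3\gamma/4$ with no $\sqrt2$, and a sign of $\mu_2$ opposite to $\gamma$ (supercritical for $\gamma<0$). If a sign disagrees, I would re-derive~\eqref{e:O3} directly at $\sigma=0$ instead of relying on the general formula.
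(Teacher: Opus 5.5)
Your route is the paper's route: substitute $\sigma_\ast=0$ into the formulas of Theorem~\ref{cor:m_w_cubicorder_degrad} and evaluate. Your values of $\omega_\ast=\alpha$, $\Lambda_0$, $\Lambda_2$, $\Lambda_{1,\omega}=(-1+i)/2$, $|\Lambda_2|^2$ and $u_{\infty,2}$ are all correct.

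The rationalization of $1/\Lambda_2$, however, is wrong in the imaginary part. Since $(2-\sqrt2)(3+2\sqrt2)=2+\sqrt2$, not $\sqrt2$, the correct values are
\[
\frac{1}{\Lambda_2}=-\frac{(1+\sqrt2)+i(2+\sqrt2)}{3\alpha},\qquad
M=\tfrac34\gamma+\frac{\beta^2}{\alpha}\Bigl(\frac{5-\sqrt2}{6}-i\,\frac{2+\sqrt2}{6}\Bigr).
\]
Your value fails the check $\Lambda_2\Lambda_2^{-1}=1$: the product has real part $(2\sqrt2-1)/3$. The corrected value also matches the $\sin(2s)$ coefficient $\frac{\beta}{6\alpha}(2+\sqrt2)=-\frac{\beta}{2}\Im\Lambda_2^{-1}$ in the paper's cubic-order remark.

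This error is not cosmetic. Your third step is only asserted ("should give"), and carrying it out with your $M$ yields $\omega_2=-\bigl(5\beta^2/(6\alpha)+3\gamma/4\bigr)$. It also gives a $\beta^2/\alpha$-coefficient $\frac13-\frac{5}{6\sqrt2}$ in $\sqrt\alpha\,\mu_2$, so the result contradicts the statement. None of your planned cross-checks would detect this. The error sits only in the $\beta^2$ terms, so the $\beta=0$ limit and the sign of $\Re\partial_\mu\lambda_\ast$ cannot see it, and the $\sqrt2$'s in $\omega_2$ cancel either way.

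With the corrected $M=M_r+iM_i$, the last step is short. Derive the $e^{is}$-equation at order $r^3$ directly; this gives $\partial_\mu\dm_\ast\,\mu_2+i\partial_\lambda\dm_\ast\,\omega_2=M$, with $\partial_\mu\dm_\ast=-\sqrt{i\alpha}=-\sqrt\alpha(1+i)/\sqrt2$ and $i\partial_\lambda\dm_\ast=(-1+i)/2$. Separating real and imaginary parts gives
\[
\omega_2=M_i-M_r=-\Bigl(\frac{7\beta^2}{6\alpha}+\frac{3\gamma}{4}\Bigr),\qquad
\mu_2=-\frac{M_r+M_i}{\sqrt{2\alpha}}=\frac{1}{\sqrt\alpha}\Bigl(\frac{\beta^2}{\alpha}\Bigl(\frac13-\frac{1}{2\sqrt2}\Bigr)-\frac{3}{4\sqrt2}\gamma\Bigr),
\]
which is exactly the claim.

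On your sign worry: this agrees with the paper's quotient formula for $\mu_2$ only if $\Lambda_{1,\mu}:=-\partial_\mu\dm_\ast=+\sqrt{i\alpha}$. With that convention, \eqref{e:O3} should read $-\tfrac12\Lambda_{1,\mu}\mu_2$. The $\omega_2$ formula is invariant under $\Lambda_{1,\mu}\mapsto-\Lambda_{1,\mu}$. Your fallback of re-deriving \eqref{e:O3} directly is therefore the right way to settle the sign.
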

\begin{proof}
    The proof is a direct evaluation of the terms in Theorem~\ref{cor:m_w_cubicorder_degrad}.
\end{proof}
\begin{remark}[Expansion in trigonometric functions to cubic order]
    At cubic order, a more detailed calculation gives the expansion
    \begin{equation}
        \begin{array}{ll}
             v^- = &r\cos(s) + r^2\frac{\beta}{2\alpha}\left(1 - \frac{1}{3}(1+\sqrt{2}) \cos(2 s) + \frac{1}{3}(2+\sqrt{2}) \sin(2 s)\right)  \\
             &+r^3\left((-\frac{1}{48}(5+2\sqrt{2}+\sqrt{3})\frac{\beta^2}{\alpha} - \frac{1}{32}(1+\sqrt{3})\gamma) \cos(3s) \right. \\
             & \qquad \left. + (-\frac{1}{48}(5+4\sqrt{2} + 3\sqrt{3} + 2\sqrt{6})\frac{\beta^2}{\alpha} + \frac{1}{32}(3+\sqrt{3})\gamma) \sin(3s)\right) + \cO(r^4).
        \end{array}
    \end{equation}
\end{remark}
We note here that $\mu_2>0$ corresponds to a supercritical Hopf bifurcation, $\mu_2<0$ to a subcritical Hopf bifurcation, since the trivial branch is unstable for $\mu>0$ as we found $\Re\partial_\mu\lambda>0$ in Lemma~\ref{l:linass}. Corollary~\ref{cor:m_w_cubicorder} then gives an explicit expression 
\begin{equation}\label{e:gamcrit}
\gamma_\mathrm{crit}=\frac{\beta^2}{\alpha}\left(\frac{4\sqrt{2}}{9} - \frac{2}{3}\right)\sim -0.0381\frac{\beta^2}{\alpha} ,
\end{equation}
so that the Hopf bifurcation is supercritical precisely when $\gamma<\gamma_\mathrm{crit}<0$. In particular, $\gamma=0$, that is, quadratic terms only, render the bifurcation subcritical.

\section{Stability and instability in an example}\label{sec:stab}

We analyze stability of the bifurcating periodic orbits found in Corollary~\ref{cor:m_w_cubicorder}, without degradation, and discuss positive degradation in Remark~\ref{r:posd}, below. We therefore  consider perturbations of our branch of periodic solutions $u_\ast^-(s;r)=r\cos(s)+\cO(r^2)$, that is solutions with infinitesimally small perturbations of the form 
\[
u^-(s; r)=u_\ast^-(s;r)+\varepsilon e^{\lambda s}v^-(s),\qquad v^-(s)=v^-(s+2\pi),
\]
which gives at order $\varepsilon$, the linear boundary-value problem
\begin{equation}
    \label{e:Llinbvp}
    \mathcal{L}(\lambda;r)v^-:=\omega(\frac{d}{ds}v^-+\lambda v^-)+\alpha v^- - 2\beta u_\ast^- v^- - 3\gamma (u_\ast^-)^2 v^- - (\mu+\sqrt{2\alpha})D^{1/2}(\omega,\lambda)v^-=0.
\end{equation}
Here, we use $\omega=\omega(r)$ and $\mu=\mu(r)$  from~\eqref{e:exp}, so that $\mathcal{L}$ indeed depends on $r$ and $\lambda$, only. As usual in Floquet stability questions, we can restrict to $|\Im\lambda|\leq \pi$, the Brillouin zone. The Dirichlet-to-Neumann operator $D^{1/2}(\omega,\lambda)$ is now given through the Fourier symbol
\begin{equation}\label{e:d1/2}
D^{1/2}(\omega,\lambda)e^{i \ell s}=e^{i\ell s}\cdot\left\{\begin{array}{ll}
\sqrt{\omega(i\ell +\lambda)},&\ell\neq 0,\\
\sqrt{\omega}\sqrt{\lambda}, & \ell=0.
\end{array}\right.
\end{equation}
We say that the periodic solution is spectrally stable if all solutions to~\eqref{e:Llinbvp} have $\Re\lambda\leq 0$ and spectrally unstable otherwise. 
\begin{theorem}\label{t:stab}
In the setting of Theorem~\ref{cor:m_w_cubicorder_degrad}, we have that the periodic solutions with $r>0$ sufficiently small are spectrally stable if $\mu_2>0$ and spectrally unstable if $\mu_2<0$. 
\end{theorem}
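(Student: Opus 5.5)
The proof will be a perturbation analysis of the Floquet problem~\eqref{e:Llinbvp} in a neighborhood of the trivial critical multiplier $\lambda=0$. Everything else is handled by compactness and the assumptions at $r=0$. At $r=0$, the operator $\mathcal{L}(\lambda;0)$ acts diagonally on Fourier modes: $e^{i\ell s}$ is mapped to $\dm(\omega_\ast(i\ell+\lambda);0,0)\,e^{i\ell s}$, up to the factor structure $\omega_\ast(i\ell+\lambda)$ substituted for the spectral parameter. The spectrum in $\Re\lambda\geq 0$ therefore consists of the $\lambda$ with $\omega_\ast(i\ell+\lambda)$ a root of $\dm$. By Assumption~\ref{assu:degen}(2), together with the fact that $\dm$ has no roots in the open right half plane (checked from Lemma~\ref{l:linass} at $\mu=0$, since the only crossing occurs at $\mu=0$ and we will need to verify no unstable roots there, e.g.\ by a winding argument on~\eqref{e:dm}), the only solutions in the closed right half plane of the Brillouin zone are $\lambda=0$, with double multiplicity coming from $\ell=\pm1$. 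The essential part, $\ell=0$ with $\sqrt{\omega\lambda}$ and $\lambda\in(-\infty,0]$, stays in the closed left half plane. The first step is to show that for $r$ small no solutions with $\Re\lambda\geq 0$ appear away from a small disk around $\lambda=0$. For $|\lambda|$ large this follows from the dominance of $\omega\lambda$. On compact sets bounded away from $0$, the operator $\mathcal{L}(\lambda;r)$ is a small perturbation, in $H^2\to H^1$ norm and uniformly, of an invertible Fredholm operator. The mode $\ell=0$ has to be handled separately: $\sqrt{\lambda}$ is not analytic at $0$, but it is continuous on $\Re\lambda\geq0$, and $\Lambda_0=\alpha\neq0$ keeps the $\ell=0$ entry invertible near $\lambda=0$.

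The second step is a Lyapunov--Schmidt reduction of~\eqref{e:Llinbvp} near $\lambda=0$ onto $\mathrm{span}\{e^{\pm is}\}$, exactly as in Section~\ref{s:2}. Solving the complementary modes, including $\ell=0$ (invertible since $\Lambda_0\neq0$), yields a $2\times2$ matrix $M(\lambda;r)$ whose determinant vanishes precisely at Floquet exponents near $0$. Two structural facts fix this determinant. First, translation invariance gives $\partial_s u_\ast^-$ in the kernel at $\lambda=0$, so $\det M(0;r)=0$. Second, differentiating the family of periodic orbits in $r$ produces a generalized eigenvector. Writing $\det M(\lambda;r)=\lambda\,(c_1(r)\lambda + c_2(r)) + \text{h.o.t.}$, with $c_1(0)\neq0$ coming from $\Lambda_{1,\omega}$, the nontrivial exponent is $\lambda_2(r)=-c_2(r)/c_1(r)$. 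The goal is to show $\lambda_2(r)=-2\mu_2 r^2\,\Re\partial_\mu\lambda_\ast/\omega_\ast+\cO(r^3)$, the usual Hopf exchange-of-stability formula. Since $\Re\partial_\mu\lambda_\ast>0$ by Lemma~\ref{l:linass}, this gives stability exactly when $\mu_2>0$.

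The main obstacle is the term $\sqrt{\omega}\sqrt{\lambda}$ in the $\ell=0$ mode, which makes $M$ non-smooth in $\lambda$ at $0$. A priori it could contribute terms of order $r^2\sqrt{\lambda}$ that dominate $\lambda$. I will compute the $\ell=0$ contribution explicitly. It enters $M$ through products of $\beta u_\ast^-$ terms as $\beta^2 r^2/(\alpha-\sqrt{2\alpha}\sqrt{\omega\lambda})$, and likewise for the $\ell=\pm2$ modes. I expect to show that the $\sqrt\lambda$-dependent parts appear identically in the diagonal and off-diagonal entries, so that they cancel in the determinant modulo factors of $\lambda$, in line with the cancellation hinted at in the introduction. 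This leaves $\det M = \lambda\,(c_1\lambda+c_2)+\cO(r^2|\lambda|^{3/2}+|\lambda|^3)$, and Rouché's theorem on a disk of radius $\sim r^2$ then locates exactly two roots, $0$ and $\lambda_2(r)$. If the cancellation fails to be exact, the fallback is to use the variable $\sqrt{\lambda}=\kappa$ and verify directly that any root with $\Re\lambda>0$ is excluded by the sign of $\mu_2$. Identifying $c_2$ with $\mu_2$ is the standard computation. Differentiating the bifurcation equation $g_1^c=0$ from~\eqref{e:gcred} in $r$ shows that $c_2\propto r\,\partial_r(\mu_{\mathrm{nl}})\,\Re\partial_\mu\lambda_\ast$, which yields the claimed sign dichotomy.
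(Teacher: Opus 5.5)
Your skeleton matches the paper's: exclude $\Re\lambda\ge 0$ away from the origin, reduce by Lyapunov--Schmidt onto $e^{\pm is}$, use translation invariance to get $\det M(0;r)=0$, and use a cancellation of the $\sqrt{\lambda}$-terms. On the first step you are more careful than the paper, which leaves it implicit. You also need $\dm(\cdot;0,0)$ to have no zeros with $\Re\lambda>0$; this is immediate, since $\dm=0$ is a quadratic in $\sqrt{\lambda}$ whose only roots are $\lambda=\pm i\alpha$.

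The genuine difference is the last step. The paper computes $a,b,d$ explicitly and checks by hand that $-2\Re(a\bar b)/|b|^2$ has the sign of $-\mu_2$. You aim instead for the exchange-of-stability formula $\lambda_2=-2\mu_2r^2\Re\partial_\mu\lambda_\ast/\omega_\ast$. That formula is correct here: at $\sigma=0$ one has $\omega_\ast=\alpha$ and $\Re\partial_\mu\lambda_\ast=\sqrt{2\alpha}$, and the paper's $a$ and $b$ give $-2\Re(a\bar b)/|b|^2=-2\sqrt{2}\mu_2/\sqrt{\alpha}$.

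The cleanest derivation does not go through a generalized eigenvector:
\begin{itemize}
\item $M(0;r)$ is the Jacobian, at $A=r/2$, of the $S^1$-equivariant reduced map $A\,g(|A|^2;\mu,\omega)$.
\item This gives $a=c=-(\partial_\mu\dm_\ast\,\mu_2+i\,\partial_\lambda\dm_\ast\,\omega_2)$, while $b=\omega_\ast\partial_\lambda\dm_\ast$.
\item Hence $\omega_2$ drops out of $\Re(a\bar b)$, and $\Re(a\bar b)=\omega_\ast\mu_2|\partial_\lambda\dm_\ast|^2\Re\partial_\mu\lambda_\ast$.
\end{itemize}
Your route explains the equivalence the paper calls "somewhat miraculous" and extends to general $f$. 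The paper's route gives the explicit matrix entries.

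Three points need repair.

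First, $\partial_r$ of the branch is not a generalized eigenvector. A Jordan chain at $\lambda=0$ would force $c_2=0$. What actually holds is
$\mathcal{L}(0;r)\partial_rv+\frac{\omega_{\mathrm{nl}}'}{\omega}\partial_\lambda\mathcal{L}(0;r)\partial_sv=-\mu_{\mathrm{nl}}'\partial_\mu F$,
using $\partial_\lambda\mathcal{L}(0;r)\partial_sv=\omega\,\partial_\omega F$ on mean-zero functions. It is the defect proportional to $\mu_{\mathrm{nl}}'$ that produces $c_2\propto r\mu_{\mathrm{nl}}'$.

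Second, the $\sqrt{\lambda}$-cancellation is the heart of the matter, and you leave it as an expectation. You need it exactly, not just at leading order. A residual term $\sqrt{\lambda}\,r^k$ in the determinant would create an extra root $\sqrt{\lambda}\sim r^{k-2}$ unrelated to $\mu_2$, so your fallback would not rescue the argument. The missing idea is as follows.
\begin{itemize}
\item With $\rho=\sqrt{\lambda}$, the only nonsmooth part of $\mathcal{L}(\rho^2;r)$ is $-(\mu+\sqrt{2\alpha})\sqrt{\omega}\,\rho P_0$, where $P_0$ projects onto constants.
\item The kernel vector $\partial_su_\ast^-$ of $\mathcal{L}(0;r)$ has zero mean.
\item Write $\mathcal{L}_{ch},\mathcal{L}_{hh},\mathcal{L}_{hc}$ for the blocks relative to $\mathrm{span}\{e^{\pm is}\}$ and its complement. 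The $\rho$-linear part of the Schur complement is $M_1=-(\mu+\sqrt{2\alpha})\sqrt{\omega}\,\mathcal{L}_{ch}\mathcal{L}_{hh}^{-1}P_0\mathcal{L}_{hh}^{-1}\mathcal{L}_{hc}$.
\item $M_1$ has rank one and annihilates the kernel vector $(1,-1)^T$ of $M(0;r)$. Hence $\partial_\rho\det M|_{\rho=0}=\mathrm{tr}(\mathrm{adj}\,M(0;r)\,M_1)=0$ and $\det M_1=0$.
\item The same reasoning removes the zero-mode terms quadratic in $\rho$ from the $\rho^2$-coefficient.
\end{itemize}
Thus $\det M=\rho^2h(\rho,r)$ with $h$ analytic, and $h(0,r)$ is determined by the regular $\lambda$-derivative alone. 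This is exactly what your identification of $c_2$ requires. It also justifies the double root at $\rho=0$, which the paper infers from the leading-order cancellation only.

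Third, apply Rouch\'e in $\rho$ rather than in $\lambda$, since the determinant is not analytic in $\lambda$ at $0$. A $\rho$-disk of radius $\sim r$ contains the double root $0$ and the roots $\pm\rho_\ast$ with $\rho_\ast^2=\lambda_2(r)$.
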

\begin{proof}
The strategy is to expand $\mathcal{L}$ from~\eqref{e:Llinbvp} in $r$ and $\lambda$. We therefore start by studying properties of $\mathcal{L}$ for $r=0$,
\[
\mathcal{L}(\lambda;0)v^-=\alpha(\frac{d}{ds}v^-+\lambda v^-)+\alpha v^- - \sqrt{2\alpha}D^{1/2}(\alpha,\lambda)v^-,\qquad \mathcal{L}(\lambda;0)e^{i\ell s}=\dm(\alpha(i\ell+\lambda))e^{i\ell s}.
\]
Restricting to $\Re\lambda\geq 0$, we then find that  the kernel of $\mathcal{L}(0;\lambda)$ is nontrivial precisely at $\lambda=0$ (recalling that $|\Im\lambda|\leq \pi $) with two-dimensional kernel and cokernel both spanned by $e^{\pm i s}$. Note that we here need the full strength of Assumption~\eqref{assu:degen} (2), as announced in Remark~\ref{r:nonres}.

Mimicking the Lyapunov-Schmidt reduction for the nonlinear problem, we can therefore determine stability by identifying the values of $\lambda$ in a neighborhood of the origin for which there is a solution to $\mathcal{L}(\lambda;r)v^-=0$. Note the small technical difficulty caused by the fact that $D^{1/2}$ is not smooth in $\lambda$, which we remedy setting $\lambda=\rho^2$ in 
\eqref{e:d1/2}. 

We then set 
$v^-=A e^{is}+\bar{A} e^{-i s} +V^-_h$, with $V^-_h$ representing a complement of the kernel, that is,
Fourier modes with $|\ell|\neq 1$. We then solve the equation
$\mathcal{L}(\lambda;r)v^-=0$ in the range of $\mathcal{L}(0;0)$ by inverting the linear operator, thus finding $V^-_h=v_+^h A + v_-^h \bar{A}$. Here, $v_\pm^h=v_\pm^h(s;r,\lambda)$ are perpendicular to the kernel and thus can be expanded in Fourier series
\[
v_\pm^h=\sum_{|\ell|\neq 1}v_\pm^\ell e^{i\ell s}.
\]
Writing $P_{\pm1}$ as the projection onto the Fourier modes $e^{\pm i s}$, respectively, the existence of eigenvalues is then equivalent to solving 
\begin{equation}\label{e:redabs}
    P_{\pm1}\mathcal{L}(\lambda;r)(A e^{is}+\bar{A}e^{-is}+v_+^h A + v_-^h \bar{A})=0,
\end{equation}
which in turn, introducing coordinates in the range of $P_1+P_{-1}$ through linear combinations of $e^{\pm i s}$, is a linear equation in $(A,\bar{A})$ that can be written in  complex matrix  form as 
\begin{equation}\label{e:Lred}
0=\mathcal{L}_\mathrm{red}(\lambda;r)\left(\begin{array}{c}
A\\ \bar{A}
\end{array}
\right)
=\left(\begin{array}{cc}
L_{11}(\lambda;r)&L_{12}(\lambda;r)\\
L_{21}(\lambda;r)&L_{22}(\lambda;r)
\end{array}
\right)\left(\begin{array}{c}
A\\ \bar{A}
\end{array}
\right).
\end{equation}
Key to establishing stability and instability then is to compute expansions of the entries $L_{jk}$ in terms of $r$ and $\lambda$. From the discussion above, we have that $L_{kj}(0;0)=0$ for all $k,j$, and
\begin{equation}\label{e:lam-terms}
 L_{12}(\lambda;0)=L_{21}(\lambda;0)=0,\quad L_{11}(\lambda;0)= \frac{1+i}{2}\alpha\lambda+\cO(\lambda^2),\ L_{22}(\lambda;0)= \frac{1-i}{2}\alpha\lambda+\cO(\lambda^2).
\end{equation}
We recall that due to $\mathcal{L}(\lambda;0)1=\dm(\alpha\lambda)=\alpha(\lambda+1 -\sqrt{2\lambda})$, we expect that the expansion of the coefficients $L_{jk}$ is smooth in $\sqrt{\lambda}$ rather than $\lambda$. We shall indeed identify terms of the form $r^2\sqrt{\lambda}$ in all matrix entries, below.

In order to establish expansions in $r$, recall~\eqref{e:v2} for quadratic coefficients $v_{2,j}$, $j \in \{-2,0,2\}$, in the expansion of the nonlinear solution and the expansions for $\omega$ and $\mu$ from~\eqref{e:mu2def}. We then find the somewhat cumbersome expression for the equation on the kernel,
\begin{equation}\label{e:P1L}
\begin{aligned}
P_1\mathcal{L}(0;r)&\left(A e^{is}+\bar{A}e^{-is}+\sum_{|\ell|\neq 1} \left(v_+^\ell A e^{i\ell s}+v_-^\ell \bar{A} e^{i\ell s}\right)\right)=\\
=&r^2\left[\omega_2 i A - \mu_2\sqrt{i\alpha} A - \sqrt{2\alpha}\frac{\sqrt{i}}{2\sqrt{\alpha}}\omega_2 A - 2 \beta (v_{2,2} \bar{A}+v_{2,0}A) - \frac{3\gamma}{4}(2A+\bar{A}) \right]\\
&+r[-\beta (v_+^0 A + v_-^0 \bar{A})-\beta (v_+^2 A + v_-^2 \bar{A})]
\\
&+\cO(r^3).
\end{aligned}
\end{equation}
In order to expand to quadratic order, we therefore need to find $v_\pm^{0/2}$, which are determined by the equations projected via $P_0$ on constants and $P_{\pm2}$ on $e^{\pm2is}$.

We find 
\begin{equation}\label{e:P0L}
\begin{aligned}
P_0\mathcal{L}(0;r)&\left(A e^{is}+\bar{A}e^{-is}+\sum_{|\ell|\neq 1} \left(v_+^\ell A e^{i\ell s}+v_-^\ell \bar{A} e^{i\ell s}\right)\right)=\\
=&\alpha (v_+^0 A + v_-^0\bar{A})-\beta r (A+\bar{A})-\sqrt{2\alpha}\sqrt{\alpha}\sqrt{\lambda}(v_+^0A+v_-^0\bar{A})+\cO(|\lambda|+r^2),
\end{aligned}
\end{equation}
which readily yields 
\begin{equation}\label{e:h0}
v_\pm^0=\frac{\beta}{\alpha}(1+\sqrt{2}\sqrt{\lambda})r+\cO(r^2+|\lambda|).
\end{equation}
Similarly, using the shorthand $\Lambda_2=\alpha(1-{\sqrt{2}}+(2-{\sqrt{2}})i)$ from~\eqref{e:lincoeff},
\begin{equation}\label{e:P2L}
\begin{aligned}
P_2\mathcal{L}(0;r)&\left(A e^{is}+\bar{A}e^{-is}+\sum_{|\ell|\neq 1} \left(v_+^\ell A e^{i\ell s}+v_-^\ell \bar{A} e^{i\ell s}\right)\right)=\\
=&\Lambda_2 (v_+^2 A + v_-^2\bar{A})-\beta r A+\cO(|\lambda|+r^2),
\end{aligned}
\end{equation}
which gives, equating coefficients of $A$ and $\bar{A}$, separately, 
\begin{equation}\label{e:h2}
v_-^2=\cO(r^2+|\lambda|),\qquad v_+^2=r\frac{\beta}{\Lambda_2}+\cO(r^2+|\lambda|).
\end{equation}
Using the expressions~\eqref{e:h0} and~\eqref{e:h2} in~\eqref{e:P1L}, we find expansions for the entries $L_{jk}$ in~\eqref{e:Lred}, to order $\sqrt{\lambda}$ as 
\begin{equation}
    \begin{aligned}
        L_{11}=&r^2(L_{11}^1+L_{11}^2 \sqrt{\lambda}) +\cO\left(|r|^3+|\lambda|\right),\\
        L_{11}^1=&\frac{-1+i}{2}\omega_2 - \frac{i+1}{\sqrt{2}} \sqrt{\alpha}\mu_2 -  \beta^2 \Lambda_0^{-1}-\frac{3\gamma}{2} - \frac{\beta^2}{\alpha}- \beta^2 \Lambda_2^{-1},\\
         L_{11}^2=& -\sqrt{2}\frac{\beta^2}{\alpha},\\
        L_{12}=&r^2(L_{21}^1r^2+L_{21}^2 \sqrt{\lambda}) +\cO\left(|r|^3+|\lambda|\right),\\
         L_{12}^1=&-\frac{\beta^2}{2}\Lambda_2^{-1} - \frac{3\gamma}{4} - \frac{\beta^2}{\alpha},\\
         L_{12}^2=&-\sqrt{2}\frac{\beta^2}{\alpha} =L_{11}^2,\\
         L_{22}^j=&\overline{L_{11}^j},\quad j\in\{1,2\},\\
         L_{21}^j=&\overline{L_{12}^j},\quad j\in\{1,2\}.
    \end{aligned}
\end{equation}
Evaluating yet more explicitly by inserting the expressions for $\mu_2$, $\omega_2$, and $\Lambda_2$, adding dependence at $\cO(\lambda)$ from \eqref{e:lam-terms}, and simplifying, we find 
\begin{equation}\label{e:L11L12}
    \mathcal{L}(\lambda;r)=\left(\begin{array}{cc}
    ar^2+b\lambda+d\sqrt{\lambda}r^2&cr^2+d\sqrt{\lambda}r^2\\
    \bar{c}r^2+\bar{d}\sqrt{\lambda}r^2& \bar{a}r^2+\bar{b}\lambda +\bar{d}\sqrt{\lambda} r^2
    \end{array}
    \right)
    \left(\begin{array}{c}
    A\\
    \bar{A}
    \end{array}\right)+\cO\left(|r\lambda|+|r|^3+|\lambda^{3/2}|\right)=0,
\end{equation}
with 
\begin{equation}
    \begin{aligned}
        a&=c=-\frac{3 \left((1+2 i)-(1+i) \sqrt{2}\right)
   \alpha  \gamma +\left((6+8 i)-(4+4 i)
   \sqrt{2}\right) \beta ^2}{\left((4+8 i)-(4+4
   i) \sqrt{2}\right) \alpha },\\
  b&= \frac{1+i}{2} \alpha,\\
  d&=\frac{\left((-2-2 i)+(1+2 i) \sqrt{2}\right)
   \beta ^2}{\left((-1-2 i)+(1+i)
   \sqrt{2}\right) \alpha }.
    \end{aligned}
\end{equation}
We can now proceed to find eigenvalues by computing 
\begin{equation}\label{e:det}
\mathcal{D}(\lambda;r)=\mathrm{det}\,\mathcal{L}(\lambda;r)=|b|^2\lambda^2+2\Re(a\bar{b})\lambda r^2 +\cO(|\lambda|^{5/2}+|\lambda|^2|r|+|\lambda|^{3/2}r^2+|\lambda||r|^3+|r|^5).
\end{equation}
In fact $\mathcal{D}$ is analytic when considered as a function of $\rho=\sqrt{\lambda}$ and $r$.

Since the $s$-derivative of the solution always belongs to the kernel, we have $\mathcal{D}(0;r^2)=0$. The Newton polygon gives that all 4 solutions near the origin scale as $\sqrt{\lambda}=\rho=\rho_1 r$. The absence of $\rho$-terms at the origin, due to the cancellation of $\sqrt{\lambda}$-terms in the determinant, then implies that $\rho=0$ is a double root. The other two roots $\rho=\pm\sqrt{\lambda}$ (only $\Re\rho>0$ corresponds to an eigenvalue) are given through 
\[
\lambda=-\frac{2\Re(a\bar{b})}{|b|^2}r^2+\cO(r^3).
\]
Evaluating gives  
\[
-\frac{2\Re(a\bar{b})}{|b|^2}=\frac{(9-9 i) \left(2 \sqrt{2}-3\right) \alpha  \gamma +\left((-34+42 i)+(24-28 i)
   \sqrt{2}\right) \beta ^2}{24 \sqrt{2}-36},
\]
which one verifies is negative precisely when $\mu_2$ is positive and positive precisely when $\mu_2$ is negative.
\end{proof}
\begin{remark}[Role of essential spectrum in reduced stability]
    The proof is slightly awkward, exhibiting the correspondence between supercritical versus subcritical branching and stability versus instability of the bifurcating branches through an independent direct calculation that exhibits the equivalence. It would clearly be interesting to see this equivalence more directly in some reduced dynamical, rather than Lyapunov-Schmidt reduced, description. We emphasize however that the technical obstruction to a reduction due to continuous spectrum at the origin appears in the proof through the presence of $\sqrt{\lambda}$-terms in the reduced eigenvalue problem --- which then somewhat miraculously cancel. 
\end{remark}

\begin{remark}[Stability and instability with degradation]\label{r:posd}
    In the case $\sigma>0$, the Hopf bifurcation analyzed here can be reduced to a two-dimensional center-manifold due to the spectral gap between the Hopf eigenvalues $\pm i\omega_\ast$ on the imaginary axis and the remainder of the spectrum, in particular the essential spectrum which consists of $(-\infty,-\sigma^2]$. Exchange of stability, that is, stability of the periodic orbit in the supercritical and instability in the subcritical case, are then classical consequences of the analysis of the cubic normal form.
\end{remark}

\section{Numerical asymptotics and homoclinic limits at finite amplitude} \label{sssec:numericalasympt}

We explore solutions to the boundary-value problem numerically.  We focus on the system~\eqref{eq:sys_cubicorder_degrad} without degradation, $\sigma=0$,  and trivial solution $u\equiv 0$ as discussed in Corollary~\ref{cor:m_w_cubicorder}. 

\paragraph{Setup.} Specifically, we compute solutions to 
\begin{equation}\label{e:bvpnum}
    G(u^-) := -D(\omega) u^- -\alpha u^- +\beta (u^-)^2+\gamma (u^-)^3 + (\mu+\sqrt{2\alpha})\; D(\omega)^{1/2} u^- = 0,
\end{equation}
for a $2\pi$-periodic function $u^-=u^-(s)$. 

Since solutions come in one-parameter families due to the shift $u^-(\cdot+s_0)$, we supplement~\eqref{e:bvpnum} with a phase condition, which we chose as 
\begin{equation}\label{e:phasecond}
\int \sin(s) u^-(s) ds =0. 
\end{equation}
This phase condition fixes the time shift as long as the first Fourier mode of $u^-$ does not vanish.
Numerically, we discretized $u^-$ on a grid of 2048 points and evaluated $D(\omega)$ as well as $D(\omega,\sigma)^{1/2}$ using Fast Fourier Transform. We then solved~\eqref{e:bvpnum} together with~\eqref{e:phasecond} for the variables $u^-$ and $\omega$ using a Newton method starting with an initial guess from Theorem~\ref{theorem:hopf} with small $r$ and expansion for $\mu$ and $\omega$ as in Theorem~\ref{cor:m_w_cubicorder}. From two nearby such initial guesses, we then pursue a secant continuation in the parameter $\mu$ to both explore the quality of our asymptotic prediction near $\mu\sim 0$ and the fate of solution branches away from the bifurcation point $(u^-,\mu)=(0,0)$.

\paragraph{Results.}

We noted in~\eqref{e:gamcrit} that $\mu_2$ changes sign when crossing the  curve $\gamma = (4\sqrt{2}-6) \beta^2 / 9$  in parameter space, depicted in Figure~\ref{fig:betagammacrit}. Below the curve, the correction coefficient $\mu_2$ is positive, the bifurcation is supercritical;  it is negative above the curve, corresponding to a subcritical bifurcation. 
\begin{figure}
    \centering
    \includegraphics[width=0.6\textwidth]{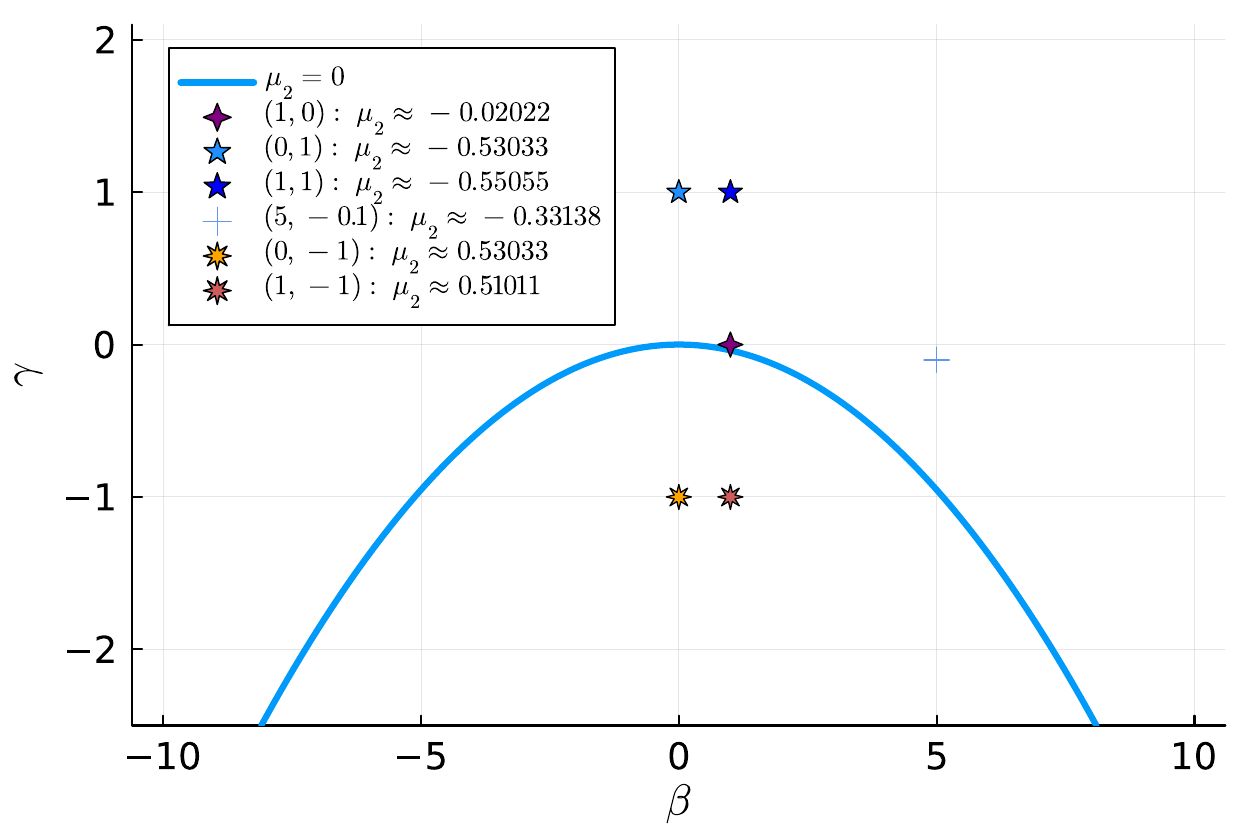}
    \caption{The critical curve $\gamma = (\frac{4\sqrt{2}}{9}-\frac{2}{3}) \beta^2 $ from~\eqref{e:gamcrit} for $\alpha=1$, separating supercritical (below) and subcritical branching is shown together with some sample values of $\mu_2 = 0$ from Corollary~\ref{cor:m_w_cubicorder}.
    Bifurcation diagrams at the sample points are shown in Figures~\ref{fig:hopfcurve_abovecrit_negor0_g},~\ref{fig:hopfcurve_g1}, and~\ref{fig:hopfcurve_g-1}, below.}
    \label{fig:betagammacrit}
\end{figure}
We show results of numerical continuation in Figures~\ref{fig:hopfcurve_abovecrit_negor0_g}--\ref{fig:homomega}. Plotted is the amplitude of the first Fourier mode $r$ against the parameter $\mu$ throughout, according to parameter values identified in Figure~\ref{fig:betagammacrit}. Shown in the graphs are both computed continuation branches (blue) and asymptotic predictions (magenta), with good agreement near the bifurcation point (see insets). As expected, the agreement is lacking when $\mu_2$ is very small, that is, near the transition from sub- to supercritical bifurcation, when $\cO(r^4)$-terms in the expansion for $\mu$ are dominant; see in particular bottom panel of Figure~\ref{fig:hopfcurve_abovecrit_negor0_g}. Insets in all figures show enlarged regions near the bifurcation point and sample profiles of mostly sinusoidal solutions.

\begin{figure}
    \centering
    \begin{subfigure}{0.8\textwidth}
    \includegraphics[width=1\textwidth]{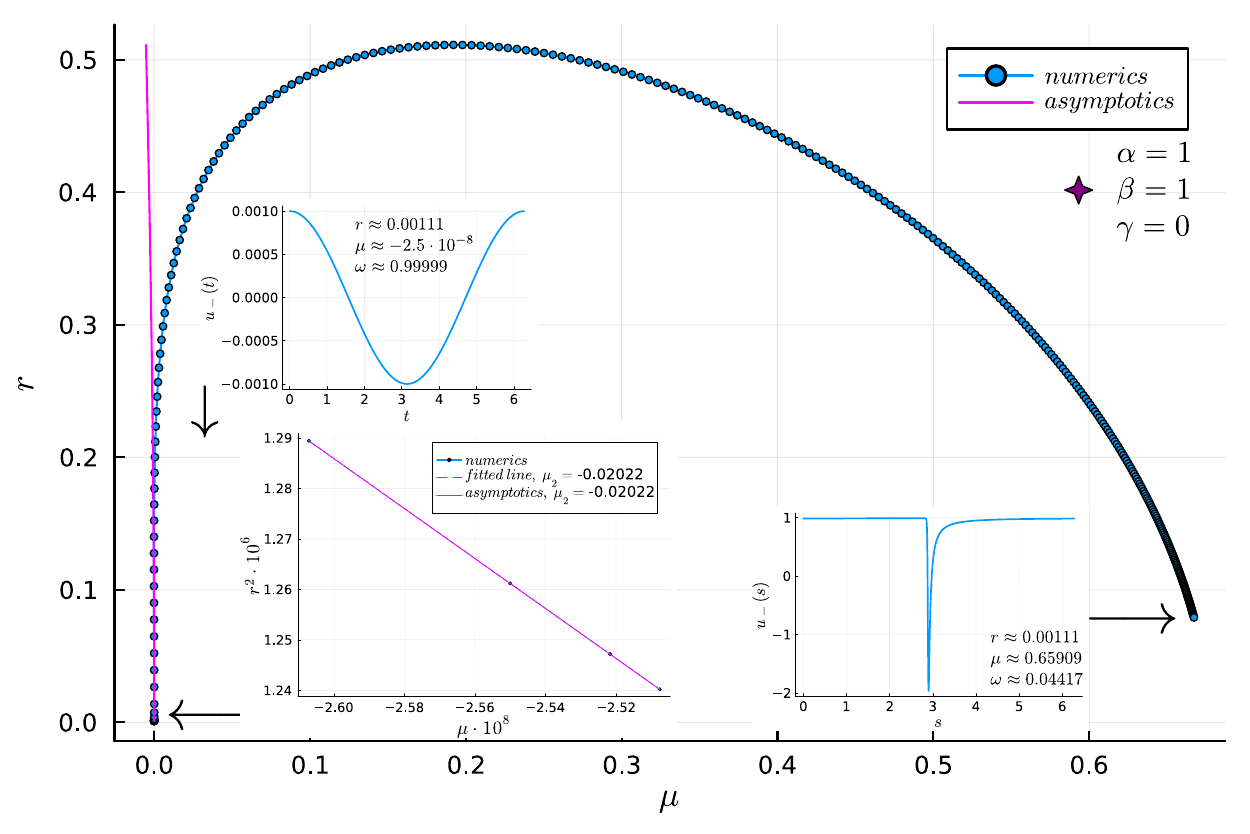}
    \end{subfigure}
    \begin{subfigure}{0.8\textwidth}
    \includegraphics[width=1\textwidth]{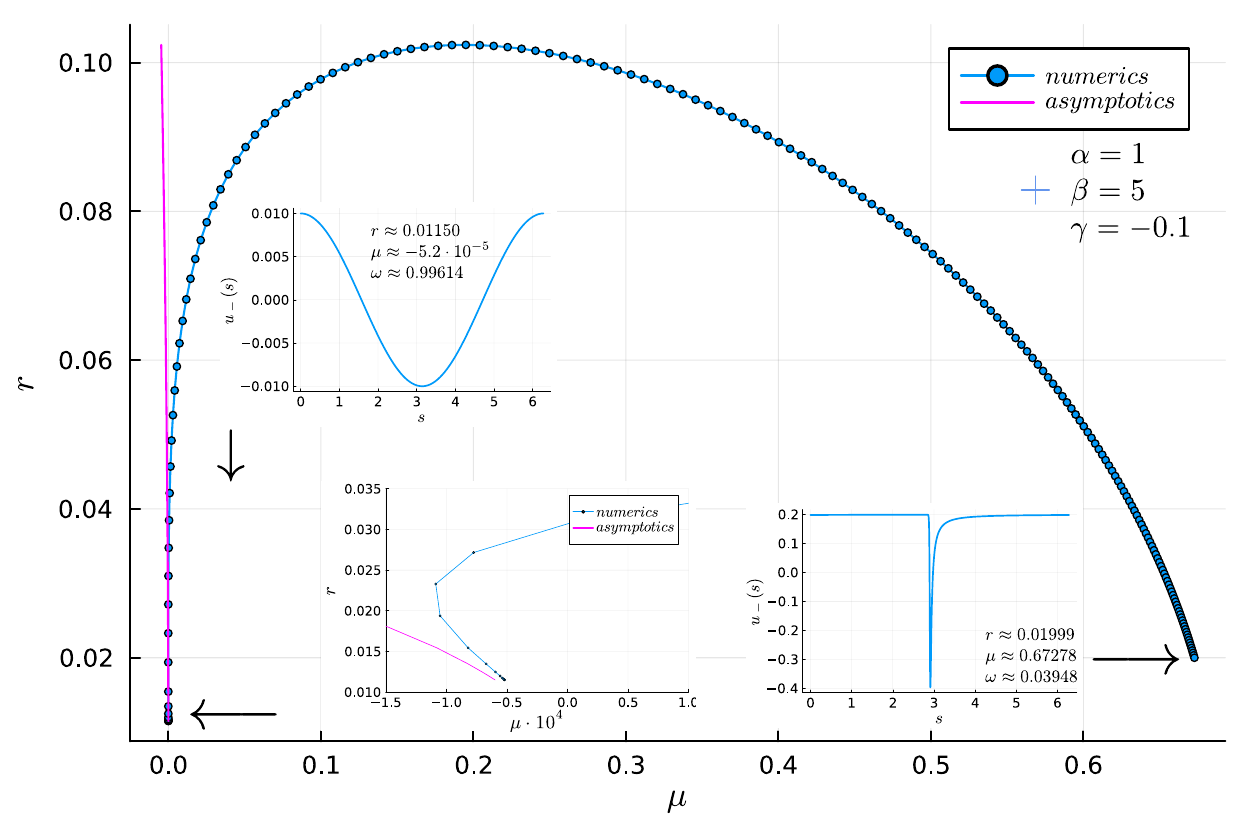}
    \end{subfigure}
    \caption{Bifurcation diagram for~\eqref{eq:sys_cubicorder_degrad} with $\sigma=0$ as discussed in Corollary~\eqref{cor:m_w_cubicorder}, with parameters in the weakly subcritical regime, close to the transition to supercriticality.  Top:  $\alpha=1$, $\beta=1$, and $\gamma=0$ (purple 4-star in Figure~\ref{fig:betagammacrit}). Bottom:   $\alpha=1$, $\beta=5$, and $\gamma=-0.1$ (blue $+$ in Figure~\ref{fig:betagammacrit}). Both diagrams exhibit hysteresis with a slightly subcritical branch turning towards supercritical parameter values. Insets show sample plots of solutions and enlarged diagrams near onset. Asymptotics are shown in magenta for comparison. Note the limit on a homoclinic profile with slowly decaying tails far from the bifurcation point.}
    \label{fig:hopfcurve_abovecrit_negor0_g}
\end{figure}

\begin{figure}
    \centering
    \begin{subfigure}{0.8\textwidth}
    \includegraphics[width=1\textwidth]{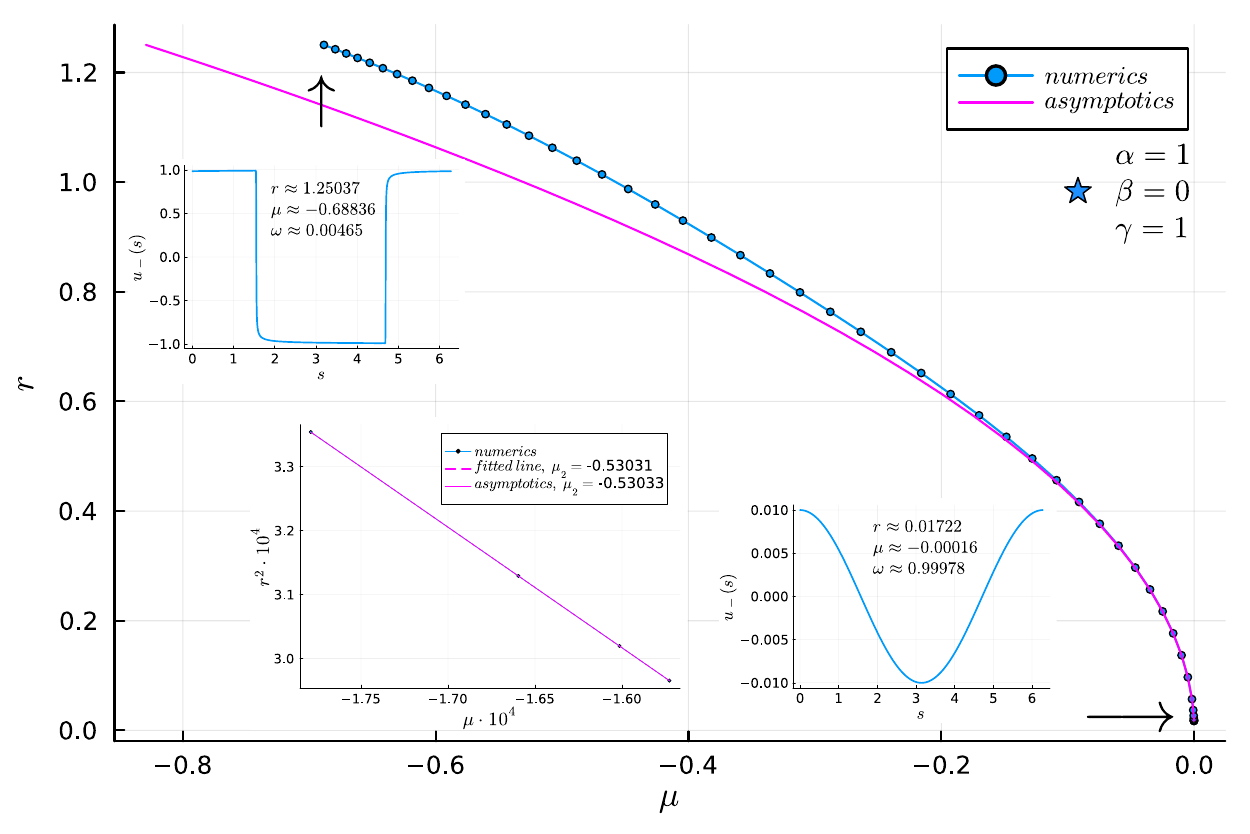}
    \end{subfigure}
    \begin{subfigure}{0.8\textwidth}
    \includegraphics[width=1\textwidth]{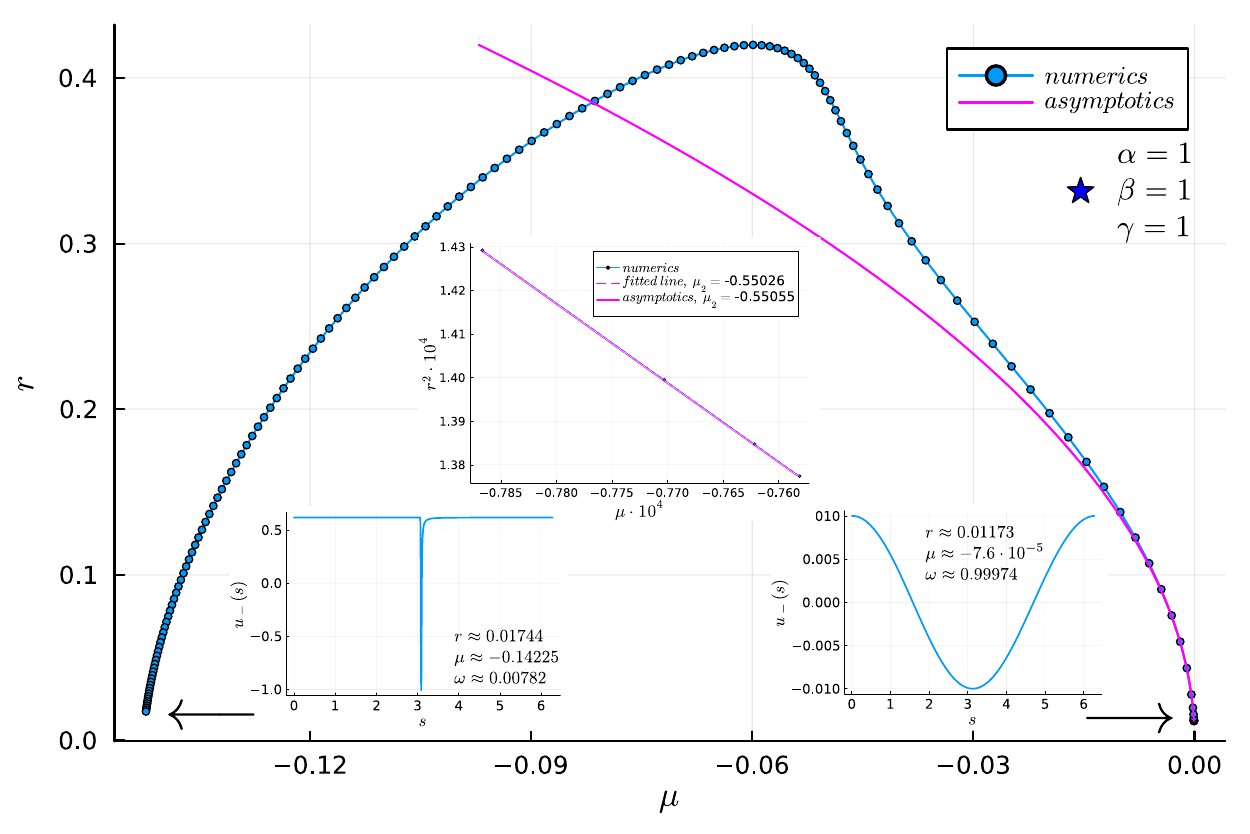}
    \end{subfigure}
    \caption{Bifurcation diagram for~\eqref{eq:sys_cubicorder_degrad} with $\sigma=0$ as discussed in Corollary~\eqref{cor:m_w_cubicorder}, with parameters in the subcritical regime. Top: $\alpha=1$, $\beta=0$, and $\gamma=1$ (light blue 5-star in Figure~\ref{fig:betagammacrit}). Bottom:  $\alpha=1$, $\beta=1$, and $\gamma=1$ (dark blue 5-star in Figure~\ref{fig:betagammacrit}). Insets show sample plots of solutions and enlarged diagrams near onset. Asymptotics are shown in magenta for comparison. Note the limit on a heteroclinic loop profile (top) and a homoclinic profile (bottom), both with long tails far from the bifurcation point. }
    \label{fig:hopfcurve_g1}
\end{figure}

\begin{figure}
    \centering
    \begin{subfigure}{0.8\textwidth}
    \includegraphics[width=1\textwidth]{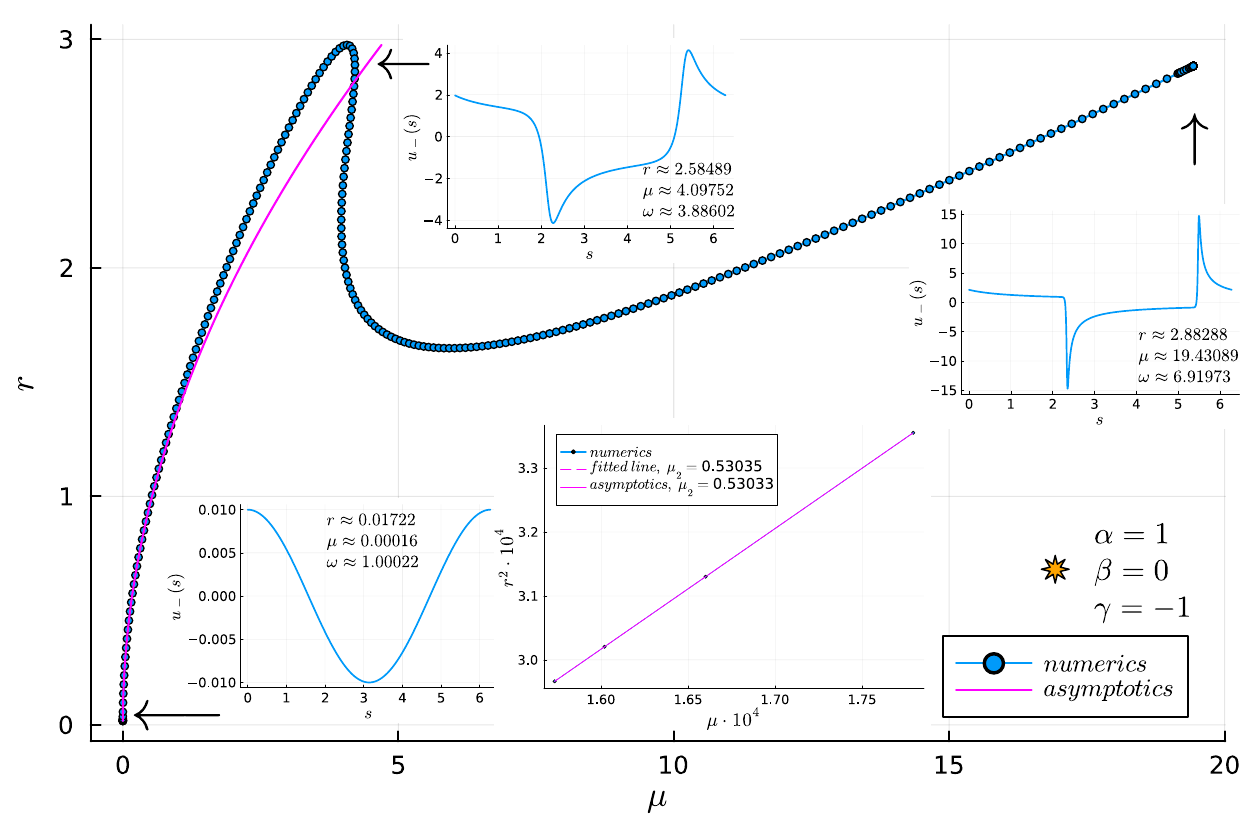}
    \label{fig:hopfcurve_a1b0g-1}
    \end{subfigure}
    \begin{subfigure}{0.8\textwidth}
    \includegraphics[width=1\textwidth]{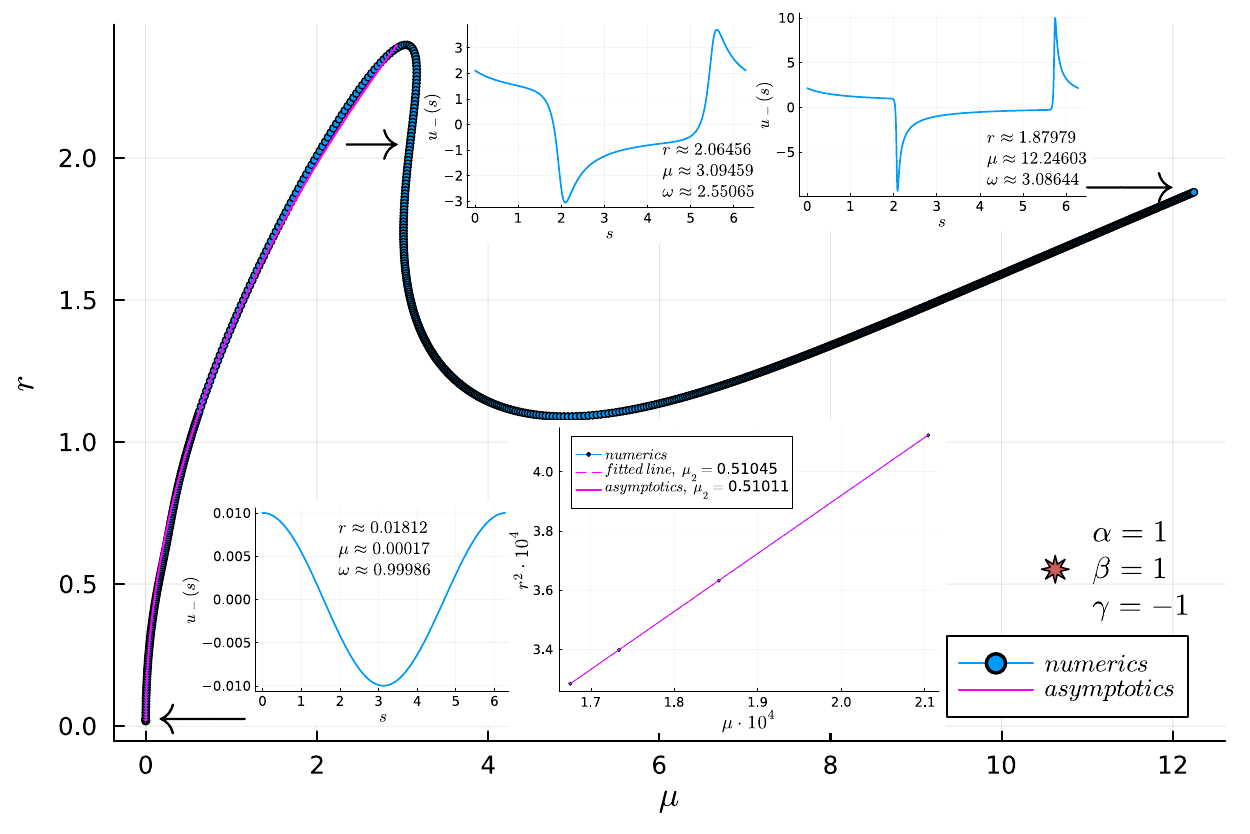}
    \label{fig:hopfcurve_a1b1g-1}
    \end{subfigure}
    \caption{Bifurcation diagram for~\eqref{eq:sys_cubicorder_degrad} with $\sigma=0$ as discussed in Corollary~\eqref{cor:m_w_cubicorder}, with parameters in the subcritical regime.  Top: for $\alpha=1$, $\beta=0$, and $\gamma=-1$ (yellow 8-star in Figure~\ref{fig:betagammacrit}). Bottom: for $\alpha=1$, $\beta=1$, and $\gamma=-1$ (red 8-star in Figure~\ref{fig:betagammacrit}). Insets show sample plots of solutions and enlarged diagrams near onset.
    Asymptotics are shown in magenta for comparison. Note that the amplitude of solutions diverges and solutions approach a double-homoclinic loop in both cases, respecting the symmetry $u^-(\cdot)=-u^-(\cdot+\pi)$ of the odd nonlinearity (where quadratic terms are negligible at large amplitudes).
    }
    \label{fig:hopfcurve_g-1}
\end{figure}
Figure~\ref{fig:hopfcurve_abovecrit_negor0_g} shows weakly subcritical branches that bend toward positive $\mu$ with expected stable oscillations. Figure~\ref{fig:hopfcurve_g1} shows subcritical branching with negative positive $\gamma$ and negative $\mu_2$, both with $\beta=0$ and $\beta\neq 0$. The case $\beta=0$ demonstrates, in particular away from the bifurcation point, the spatio-temporal symmetry $u^-(\cdot+\pi)=-u^-(\cdot)$, that is, a sign change after half the period.
Finally, Figure~\ref{fig:hopfcurve_g-1} shows the corresponding situation with $\gamma=-1$ and supercritical bifurcations leading to stable periodic solutions. We also compared expansions for the frequency $\omega$ in Figure~\ref{fig:homomega} with good agreement.

Interestingly, bifurcation diagrams are not monotone, exhibiting what we expect to be multistability, as visible both in the weakly subcritical bifurcations of Figure~\ref{fig:hopfcurve_abovecrit_negor0_g} and the non-monotone bifurcation curves in Figure~\ref{fig:hopfcurve_g-1}. 
Generally, we find good agreement of the quadratic expansion $\mu_2$; compare insets in Figures~\ref{fig:hopfcurve_abovecrit_negor0_g},~\ref{fig:hopfcurve_g1}, and~\ref{fig:hopfcurve_g-1}.

Lastly, numerical continuation eventually breaks down for various reasons. In the supercritical cases $\gamma=-1$, Figure~\ref{fig:hopfcurve_g-1}, we were able to continue for large $\mu$-values with correspondingly large amplitudes of oscillations, which we suspect exist for all $\mu>0$ with amplitude converging to infinity as $\mu\to\infty$. In all other cases, $\gamma\leq 0$ or $\gamma\ll \beta$, oscillations terminate on a homoclinic orbit or a heteroclinic loop, visible in the sharp spikes or transition layers of the inset plots, as well as in the plots of frequencies $\omega$ which converge to 0 as the end of the bifurcation path is approached. Such homoclinic limits are of course common in delay-differential equations, reinforcing the interpretation of the buffering effect of the diffusive bulk as a delay term in the equation.

\begin{figure}
\centering
\begin{subfigure}{0.49\textwidth}
    \includegraphics[width=8cm, height=3.5cm]{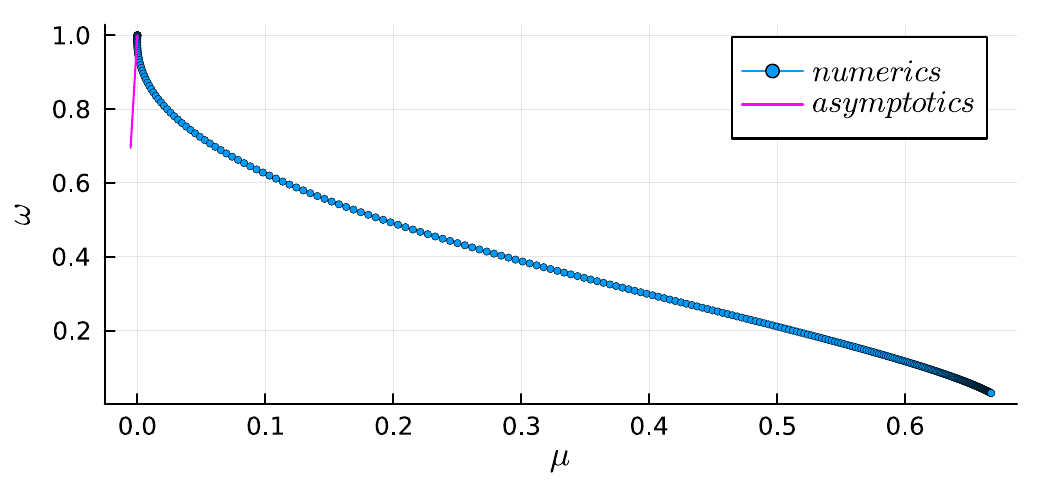}
    \caption{Parameters: $\alpha = 1$, $\beta = 1$, and $\gamma = 0$, as for the purple 4-star in Figure~\ref{fig:betagammacrit} and Figure~\ref{fig:hopfcurve_abovecrit_negor0_g} (top).}
    \label{fig:homa1b1g0}
\end{subfigure}
\hfill
\begin{subfigure}{0.49\textwidth}
    \includegraphics[width=8cm, height=3.5cm]{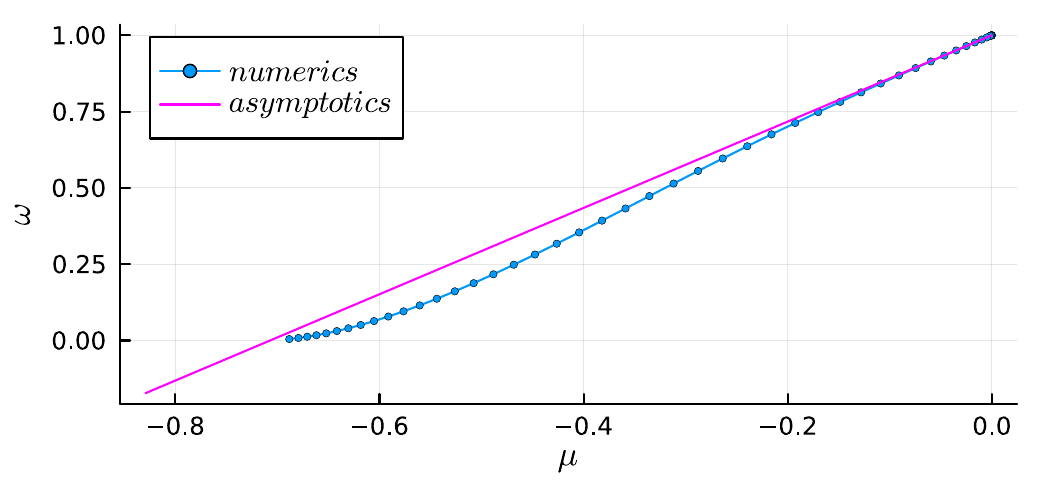}
    \caption{Parameters: $\alpha = 1$, $\beta = 0$, and $\gamma = 1$, as for the light blue 5-star in Figure~\ref{fig:betagammacrit} and as for Figure~\ref{fig:hopfcurve_g1} (top).}
    \label{fig:homa1b0g1}
\end{subfigure}
\begin{subfigure}{0.49\textwidth}
    \includegraphics[width=8cm, height=3.5cm]{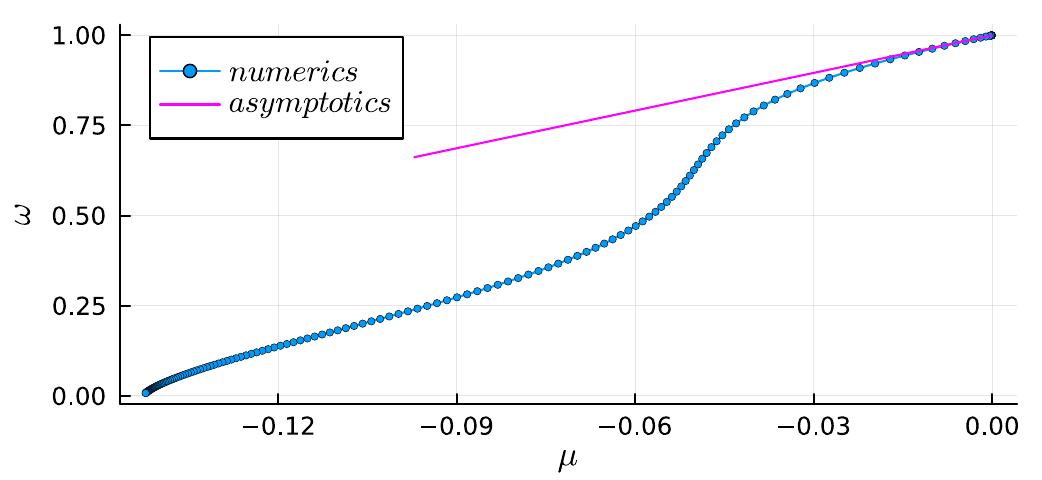}
    \caption{Parameters: $\alpha = 1$, $\beta = 1$, and $\gamma = 1$, as for the dark blue 5-star in Figure~\ref{fig:betagammacrit} and as for Figure~\ref{fig:hopfcurve_g1} (bottom).}
    \label{fig:homa1b1g1}
\end{subfigure}
\hfill
\begin{subfigure}{0.49\textwidth}
    \includegraphics[width=8cm, height=3.5cm]{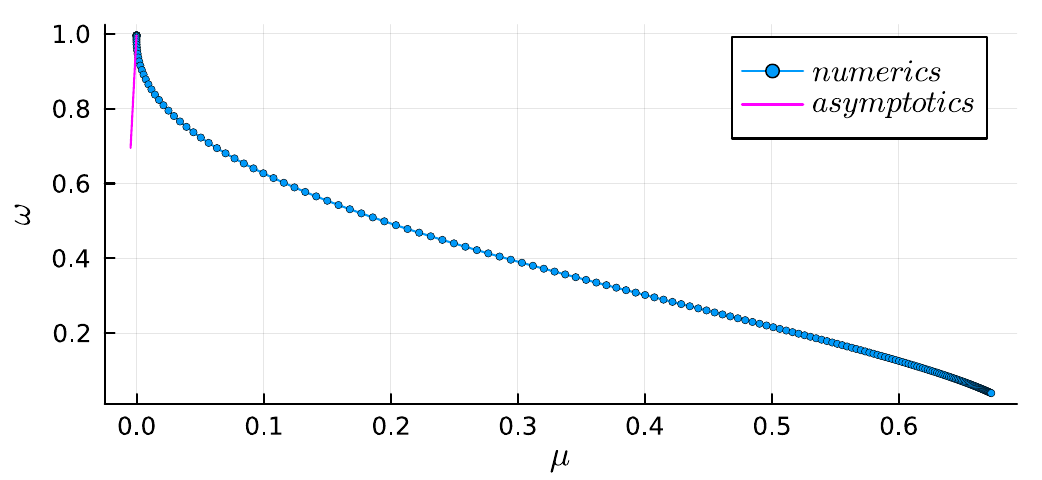}
    \caption{Parameters: $\alpha = 1$, $\beta = 5$, and $\gamma = -0.1$, as for the blue $+$ in Figure~\ref{fig:betagammacrit} and as for Figure~\ref{fig:hopfcurve_abovecrit_negor0_g} (bottom).}
    \label{fig:homa1b5g-0p1}
\end{subfigure}
\caption{Comparison of frequencies with asymptotics (magenta) and homoclinic  or heteroclinic loop (both zero frequency) limits shown in plots of $\omega$ versus bifurcation parameter $\mu$.  See captions in  subfigures for parameters.}
\label{fig:homomega}
\end{figure}

\section{Discussion} \label{sec:discussion}

We discuss both mathematical aspects and extensions of the results herein and other potential applications.

\paragraph{Large bounded domains.}

It would be interesting to study potential interactions of Hopf bifurcations happening in a symmetric system of size $2L\gg 1$ at both boundaries, that is,
\[
\partial_tu=\partial_{xx}u,\ |x|<L, \qquad u|_{x=\pm L}=u^\pm,\qquad \frac{d}{dt}u^\pm=f(u^\pm,\partial_n u|_{x=\pm L}).
\]
For $L\gg 1$, one finds two pairs of purely imaginary eigenvalues crossing almost simultaneously, generating localized oscillations at both boundaries that interact weakly through the domain. Interestingly, the presence of two Hopf bifurcations suggests that the dynamics here could be yet more complicated, with the potential presence of invariant tori. The exponentially weak interaction of the two bifurcations appears similar to the interaction in the Nonlinear Schr\"odinger equation with double wells studied in \cite{KKSW}. 

\paragraph{Systems of equations.}
The analysis we carry out clearly can be adapted to systems of equations, with suitable assumptions where one takes determinants for constructing $\dm$ and assumes similar simple spectral crossing.  An interesting question then is to understand, even on the linearized level, in which way the boundary coupling specifically can enable oscillations that would not be present without it, and whether this mechanism would require the type of active sensing, that is, positive feedback from $\partial_n u$ rather than the typical negative feedback from diffusive balance; see also the discussion of Min-protein oscillations below.

\paragraph{Higher space dimension.}
More ambitious are generalizations to higher space-dimensions, where, say, reactions occur on the boundary of a bounded domain $\Omega$ and are coupled to a diffusive field in $\R^N\setminus \Omega$, even in simple geometries such as $\Omega=B_R$, a ball of radius $R$. In addition to technical complications due to the interaction with essential spectrum that is now represented by infinitely many spherical harmonics on $\partial\Omega$, one would always find a substantially more involved Hopf bifurcation problem with symmetry $\mathrm{O}(n)$; see for instance~\cite{fiedler,ioossrossi}.

\paragraph{Homoclinic limits and  large delays.}

The limiting points in the numerical global  bifurcation diagrams that we computed are strongly reminiscent of slowly oscillating solutions in delay equations with large delay, or equivalently, singularly perturbed delay equations, 
\begin{equation} \label{eq:delayDE}
    \varepsilon \dot{x}(t) = -x(t) + f(x(t-1)).
\end{equation}
Formally, one may wish to set $\varepsilon=0$ and find a simple one-dimensional interval map $x_{t}=f(x_{t-1})$ with potential nontrivial periodic orbits and even chaotic dynamics. 
Existence of such ``slowly oscillating solutions'' has been established in several scenarios; see for instance~\cite{mallet1986bifurcation} where a global continuation technique
based on degree theory was deployed to continue the branch of periodic solutions. Asymptotically as $\varepsilon \to 0$, the periodic solutions have square wave shape, with regular convergence if $f$ is monotone and often non-uniform convergence if not, ``reminiscent of the Gibbs phenomenon of Fourier series'', and reminiscent of the periodic solutions we found in Figure~\ref{fig:hopfcurve_g1} (top panel). We also refer to the survey~\cite{nussbaum2003singperturb}  for further  analytical results on slowly oscillating periodic solutions to~\eqref{eq:delayDE}.

Closer to our setting would be delay dependence in the linear term rather than the nonlinearity, but few results appear to tackle that situation directly. Linear dependence on the flux term, and therefore linear delay terms, arises of course  more commonly for systems with linear bulk diffusion and otherwise nonlinear kinetics (without obvious delay) on the boundary; see for instance~\cite{gomez2007self, levine2005membrane, gomez2021pattern,roeger}. In this context, we also mention~\cite{pelz2025synchronized} where a class of such bulk diffusion, compartmental-reaction systems are analyzed using an extension of strong localized perturbation theory, asymptotically reducing to an integro-differential equation with a linear coupling term that represents the diffusive coupling between all well-separated compartments embedded in the linear diffusion field. The results demonstrate the emergence of novel synchronization behavior and symmetry-breaking in such systems, which describe  systems of biological cells more realistically incorporating  their natural compartmentalization in the model. 

\paragraph{Strain, dislocations, and patterns.} Homoclinic phenomena strikingly similar to the termination points observed here have also been found in models for dislocations in  elastic materials, notably in the Weertman equation~\cite{josien2018mathematical,josien2018fourier} and in models for striped phases in soft matter~\cite[\S5]{chen2021strain}.

\paragraph{Planarians and canards.} Square waves as periodic solutions with large period and relaxation oscillation type behavior were also observed in the model for planarian regeneration, where the Hopf bifurcation in fact has frequency $\varepsilon\ll 1$~\cite{scheel2021signaling}. In direct simulations, there appears to be a very steep branch of periodic solutions, similar to the transition to relaxation oscillations in the van der Pol oscillator through a Canard unfolding~\cite{diener,KS1}. Singular perturbation theory in this geometric spirit has not been attempted in systems of the type we study here; see however~\cite{,hupkessand,fayescheel,nonlocdich,tao} for technical tools and results pointing in this direction.

\paragraph{Min-protein oscillations on membranes.} The effect of delay through buffering in a diffusive bulk region has also been identified as a key mechanism for the dynamics of Min-proteins on cell membranes~\cite{brauns2021bulk}. The authors investigate intracellular Min
protein dynamics in E. coli, which control E. coli cell division,  focussing  on only the dynamics of the
concentrations of MinD protein and of MinE protein, the cell membrane
(lipid bilayer) bounding the cytosol (bulk; intracellular), and the
chemical species ADP and ATP.

Oscillations in the concentrations of the Min proteins are reported along
the anterior-posterior axis of the cylinder-shaped cells in vivo. In
vitro, they are additionally reported along the dorsal-ventral axis
(denoted by the vertical direction below; cytosol height).
Spatiotemporal patterns along the vertical direction are not present in
vivo, and hence a much wider range of spatiotemporal patterns can be
observed in vitro. Oscillations as reported in our work here relate to dynamics along the vertical axis of a long E. coli cell (without lateral effects) in the regime of \emph{large bulk heights}, that is, almost decoupled upper and lower membrane dynamics. The results in~\cite{brauns2021bulk} suggest traveling-wave type propagation of oscillations from the boundary into the bulk, similar to the evanescent waves that we find in our analysis. The authors stress the importance of spatial
gradients in the vertical direction, which our system includes in the
boundary kinetics.  The linear stability analysis there exhibits that the coupling to the bulk with induced  buffering effects is crucial to the emergence of oscillations, an effect which we isolated in the overly simple scalar equation~\eqref{eq:sys}.

\paragraph{Other applications.} More generally speaking, our results and dynamic boundary conditions are related to strong variations in diffusivity or additional strong fluxes near a boundary compartment, for instance a biological membrane. An example are  presynaptic neurons, where transporters can modify the effective diffusivity of an extracellularly diffusing chemical species, such as serotonin transporters (SERT) that are of key importance in the serotonin reuptake into presynaptic neurons~\cite{coleman2016x, hasenhuetl2016electrogenic, henke2017toward, yang2021illumination}. 
More generally, modified transport properties near the boundary can be generated by mechanical forces similar to the Marangoni effect (equilibrating dynamics due to differences in surface tension of two meeting fluids; e.g., causing liquid to flow up the inner surface of a glass filled with a drink, ``tears of wine'';~\cite{thomson1855xlii, marangoni1865sull, gibbs1878equilibrium}). Such forces have been found to be relevant in the self-organization of cells to form the body axes such as the primary head-to-tail (anterior-posterior) axis in embryos~\cite{gsell2025marangoni}. A range of further studies have shown that molecules critical for morphogenesis, so-called morphogens, generate intercellular mechanical forces that create the geometry and internal kinetics of the organism~\cite{yang2023morphogens, cheikh2025scaling}. Lastly, we also mention that there could also be significant electrochemical forces close to a cell membrane~\cite{spitzer2005electrochemical, israelachvili2013intersection}. 

\paragraph{Acknowledgments.} A.S. gratefully acknowledges partial support through NSF grant  DMS-2506837.

\bibliographystyle{abbrv}
\bibliography{refs}

\end{document}